\newcounter{dummy} \numberwithin{dummy}{section}
\newtheorem{theorem}[dummy]{Theorem}
\newtheorem{corollary}[dummy]{Corollary}
\newtheorem{lemma}[dummy]{Lemma}
\newtheorem{proposition}[dummy]{Proposition}
\theoremstyle{remark}
\newtheorem{remark}[dummy]{Remark}
\newcommand{\calX}{\mathcal{X}}
\newcommand{\calA}{\mathcal{A}}
\newcommand{\scrE}{\mathscr{E}}
\newcommand{\bfS}{\mathbf{S}}
\newcommand{\bfsigma}{\boldsymbol \sigma}
\DeclareMathOperator{\Ann}{Ann}
\DeclareMathOperator{\sgn}{sgn}
\DeclareMathOperator{\rank}{rank}
\DeclareMathOperator{\spn}{span}
\newcommand{\E}{\mathbb{E}}
\newcommand{\pp}{\mathbb{P}}
\DeclareMathOperator{\length}{length}
\DeclareMathOperator{\dv}{div}
\newcommand{\R}{\mathbb{R}}
\newcommand{\N}{\mathbb{N}}
\newcommand{\qq}{\mathbb{Q}}
\newcommand{\dd}{\,{\mathrm d}}
\newcommand{\db}{{\mathrm d}}
\newcommand{\e}{\operatorname{e}}
\newcommand{\im}{\mathrm{i}}
\newcommand{\soo}{\tau_0}
\numberwithin{equation}{section}
\title{Score matching for sub-Riemannian bridge sampling}
\author[E.~Grong, K.~Habermann, S.~Sommer]{Erlend~Grong, Karen~Habermann, Stefan~Sommer}
\address{Erlend~Grong, University of Bergen, Department of Mathematics, P.O. Box 7803, 5020 Bergen, Norway.}
\email{erlend.grong@uib.no}
\address{Karen Habermann, Department of Statistics, University of Warwick, Coventry, CV4 7AL, United Kingdom.}
\email{karen.habermann@warwick.ac.uk}
\address{Stefan Sommer, Department of Computer Science, University of Copenhagen, Universitetsparken 5, DK-2100 Copenhagen E, Denmark.}
\email{sommer@di.ku.dk}
\date{}
\subjclass[2020]{58J65, 53C17, 62R30}
\keywords{Bridge processes, sub-Riemannian manifolds, score matching, bridge sampling.}
\thanks{The first author is supported by the grant GeoProCo from the Trond Mohn Foundation - Grant TMS2021STG02 (GeoProCo). The third author is supported by a research grant (VIL40582) from VILLUM FONDEN and the Novo Nordisk Foundation grant NNF18OC0052000.}
\begin{document}

\begin{abstract}
    Simulation of conditioned diffusion processes is an essential tool in inference for stochastic processes, data imputation, generative modelling, and geometric statistics. Whilst simulating diffusion bridge processes is already difficult on Euclidean spaces, when considering diffusion processes on Riemannian manifolds the geometry brings in further complications. In even higher generality, advancing from Riemannian to sub-Riemannian geometries introduces hypoellipticity, and the possibility of finding appropriate explicit approximations for the score of the diffusion process is removed. We handle these challenges and construct a method for bridge simulation on sub-Riemannian manifolds by demonstrating how recent progress in machine learning can be modified to allow for training of score approximators on sub-Riemannian manifolds. Since gradients dependent on the horizontal distribution, we generalise the usual notion of denoising loss to work with non-holonomic frames using a stochastic Taylor expansion, and we demonstrate the resulting scheme both explicitly on the Heisenberg group and more generally using adapted coordinates. We perform numerical experiments exemplifying samples from the bridge process on the Heisenberg group and the concentration of this process for small time.
\end{abstract}

\maketitle

\section{Introduction}

The problem of simulating bridges for diffusion processes has been studied extensively in the literature, such as in~\cite{BeskosPapaspiliopoulosRobertsFearnhead,delyonhu,bridgesim3,bladt2016,bridgesim6,bridgesim5}, and effective simulation schemes for diffusion bridge processes have become paramount for data imputation and in parameter inference for stochastic processes as they provide a stochastic method for approximating the intractable likelihood or when sampling from a posteriori distribution of parameters.
In the parameter estimation from given sets of time-discrete observations, bridge simulations feature in the data imputation performed when missing data is simulated through diffusion bridge processes that connect the observed data, see e.g. Golightly and Wilkinson~\cite{bayes0}, Papaspiliopoulos, Roberts and Stramer~\cite{bayes1}, and van der Meulen and Schauer~\cite{bayes2}.

Bridge sampling further plays a prominent role in geometric statistics and has found applications in areas such as medical image analysis and shape analysis, see~\cite{shape1}. As discussed in~\cite{medical2}, simulated diffusion bridge processes have been used to approximate the heat kernel of a diffusion process on a Riemannian manifold and to estimate the underlying metric structure.

Most previous work on diffusion bridge simulations focuses on elliptic diffusion processes on Euclidean spaces where the diffusivity matrix is uniformly invertible. The simulations introduced by Bierkens, van der Meulen and Schauer~\cite{bridgesim5} apply both to elliptic diffusion processes and to those hypoelliptic diffusion processes whose hypoellipticity arises from an interaction of the drift term with the diffusivity, such as for Langevin dynamics.
Although the recent papers \cite{JS21,buiInferencePartiallyObserved2023,corstanjeSimulatingConditionedDiffusions2024} have addressed bridge simulations on manifolds, these schemes do not apply directly to simulate bridge processes associated with diffusion processes on sub-Riemannian manifolds, that is, those hypoelliptic diffusion processes where the hypoellipticity is induced by the diffusivity term itself, without any need to interact with the drift term. Phrased differently, as far as the authors are aware, it has not been known how to effectively simulate bridge processes on sub-Riemannian manifolds. Such spaces are described as a triple $(M,E,g)$, where $M$ is a smooth manifold, $E\subseteq TM$ is a bracket-generting subbundle that is called horizontal bundle and $g$ is a smoothly varying inner product on $E$. The subbundle $E$ represents the directions in which we allow the stochastic process on $M$ to diffuse and the bracket-generating assumption on $E$ ensures that the induced diffusion process is hypoelliptic. Sub-Riemannian structures naturally appear in all sciences, e.g. in the study of constrained physical systems such as the motion of robot arms or the orbital dynamics of satellites.

Motivated by the apparent gap in existing bridge simulation schemes, the immersiveness of sub-Riemannian geometry, and the importance of bridge simulation for data imputation and parameter inference, we here develop a method for bridge simulation on sub-Riemannian manifolds.

\smallskip

A central object in the study of diffusion bridge processes is \emph{the score} of the density of the corresponding unconditioned diffusion process. To introduce the score associated with a diffusion process on a manifold $M$, we need a gradient $\nabla^E$ on $M$. On a Riemannian manifold, $\nabla^E$ is simply taken to be the usual gradient, whilst on a sub-Riemannian manifold $(M,E,g)$, the horizontal gradient $\nabla^E$ will be defined with respect to the subbundle $E$.
For a time-homogeneous diffusion process on $M$ starting from $x_0\in M$ and having transition probability density $p_t\colon M\times M\to\R$ for $t>0$, the score $S_t(x_0,\cdot)\colon M\to E$ is given by, for $y\in M$,
\begin{equation} \label{eq:score}
S_t(x_0,y) = \nabla^{y,E} \log p_t(x_0, y).
\end{equation}
Notably, the score is generally intractable due to its dependency on the transition probability densities.

Bridge simulation schemes often employ guiding terms that approximate the intractable score of the diffusion process. As we do not have such approximations available explicitly in the sub-Riemannian context, we employ the approach of \cite{HdBDT21} building on score matching techniques \cite{hyvarinenEstimationNonNormalizedStatistical2005,vincentConnectionScoreMatching2011} to learn the score parameterised by a neural network and use this score in the simulation of the conditioned diffusion processes. The guiding terms used in previous works on Riemannian manifolds include radial vector fields, see~\cite{JS21,buiInferencePartiallyObserved2023}, or exploit the heat kernel on comparison manifolds as in~\cite{corstanjeSimulatingConditionedDiffusions2024}. Neither of these options are computationally tractable in the sub-Riemannian context. For training the neural network score approximation, we show how the score divergence as well as a denoising term can be used for the loss function. Both approaches are used in the Euclidean setting, but we will see that the sub-Riemannian geometry influences the terms resulting in non-trivial differences between the Euclidean and geometric losses, see Figure~\ref{fig:illustration}.

\begin{figure}[htp] 
    \centering
    \resizebox{.5\columnwidth}{!}
{

\tikzset{every picture/.style={line width=0.75pt}} 

\begin{tikzpicture}[x=0.75pt,y=0.75pt,yscale=-1,xscale=1]

\draw    (147.38,214.88) -- (320.72,159.73) ;
\draw [shift={(322.63,159.13)}, rotate = 162.35] [color={rgb, 255:red, 0; green, 0; blue, 0 }  ][line width=0.75]    (10.93,-3.29) .. controls (6.95,-1.4) and (3.31,-0.3) .. (0,0) .. controls (3.31,0.3) and (6.95,1.4) .. (10.93,3.29)   ;
\draw    (255.25,260.75) -- (206.79,86.18) ;
\draw [shift={(206.25,84.25)}, rotate = 74.48] [color={rgb, 255:red, 0; green, 0; blue, 0 }  ][line width=0.75]    (10.93,-3.29) .. controls (6.95,-1.4) and (3.31,-0.3) .. (0,0) .. controls (3.31,0.3) and (6.95,1.4) .. (10.93,3.29)   ;
\draw [line width=1.5]    (183,218) -- (202,199) ;
\draw [line width=1.5]    (202,199) -- (221,205.79) ;
\draw [line width=1.5]    (221,205.79) -- (217,192) ;
\draw [line width=1.5]    (217,192) -- (226,204) ;
\draw [line width=1.5]    (226,204) -- (235,187) ;
\draw [line width=1.5]    (235,187) -- (254,182) ;
\draw [line width=1.5]    (254,182) -- (267,192) ;
\draw [line width=1.5]    (267,192) -- (289,193) ;
\draw  [fill={rgb, 255:red, 155; green, 155; blue, 155 }  ,fill opacity=0.38 ] (227.38,137.63) .. controls (247.38,177.63) and (248.38,160.13) .. (254.13,183.88) .. controls (259.88,207.63) and (249.54,197.13) .. (256.88,249.13) .. controls (264.21,301.13) and (269.46,316.54) .. (235.13,188.88) .. controls (200.79,61.21) and (207.38,97.63) .. (227.38,137.63) -- cycle ;
\draw  [fill={rgb, 255:red, 128; green, 128; blue, 128 }  ,fill opacity=0.54 ] (209.75,175.75) .. controls (216.67,163.67) and (237.25,155.75) .. (254.25,163.25) .. controls (271.25,170.75) and (277,172.33) .. (304.67,165) .. controls (332.33,157.67) and (178.42,205.58) .. (178.33,205.67) .. controls (178.25,205.75) and (202.83,187.83) .. (209.75,175.75) -- cycle ;
\draw [line width=1.5]    (184.25,233.75) -- (183,218) ;
\draw [line width=1.5]    (170.25,241.75) -- (184.25,233.75) ;
\draw [line width=1.5]    (289,193) -- (295.75,202.25) ;
\draw [line width=1.5]    (295.75,202.25) -- (312.75,192.25) ;
\draw  [dash pattern={on 4.5pt off 4.5pt}]  (190.38,126.88) -- (322.73,239.45) ;
\draw [shift={(324.25,240.75)}, rotate = 220.38] [color={rgb, 255:red, 0; green, 0; blue, 0 }  ][line width=0.75]    (10.93,-3.29) .. controls (6.95,-1.4) and (3.31,-0.3) .. (0,0) .. controls (3.31,0.3) and (6.95,1.4) .. (10.93,3.29)   ;
\draw  [dash pattern={on 4.5pt off 4.5pt}]  (194.75,251.75) -- (317.47,103.79) ;
\draw [shift={(318.75,102.25)}, rotate = 129.67] [color={rgb, 255:red, 0; green, 0; blue, 0 }  ][line width=0.75]    (10.93,-3.29) .. controls (6.95,-1.4) and (3.31,-0.3) .. (0,0) .. controls (3.31,0.3) and (6.95,1.4) .. (10.93,3.29)   ;

\draw (329,161) node [anchor=north west][inner sep=0.75pt]   [align=left] {$\displaystyle E_{X_{t_{i}}}$};
\draw (173,74) node [anchor=north west][inner sep=0.75pt]   [align=left] {$\displaystyle V_{X}{}_{_{t_{i}}}$};
\draw (311.5,245) node [anchor=north west][inner sep=0.75pt]   [align=left] {$\displaystyle E_{X_{t_{i+1}}}$};
\draw (310,77.5) node [anchor=north west][inner sep=0.75pt]   [align=left] {$\displaystyle V_{X}{}_{_{t_{i+1}}}$};
\draw (155,240.75) node [anchor=north west][inner sep=0.75pt]   [align=left] {$\displaystyle X_{t}$};

\end{tikzpicture}

}

  \caption{In a sub-Riemannian geometry, the increments of a horizontal stochastic process $(X_t)_{t\geq 0}$ lie in the horizontal bundle $E$. When approximating the score for short time intervals by taking derivatives of the density of the steps, a common approximation takes the increment at the $i$th step to be normally distributed in the distribution $E_{X_{t_i}}$ at $X_{t_i}$ (sketched dark grey). However, this distribution is not differentiable with horizontal derivatives in the distribution $E_{X_{t_{i+1}}}$ at step $X_{t_{i+1}}$. The hypoellipticity of $(X_t)_{t\geq 0}$ implies that the distribution has a vertical component in $V_{X_{t_i}}$ (sketched light grey). We rely on Taylor expansion for the stochastic integral to approximate this component, which we then exploit to derive a sub-Riemannian denoising loss for use in the training of neural network score approximators.}
  \label{fig:illustration}
\end{figure}
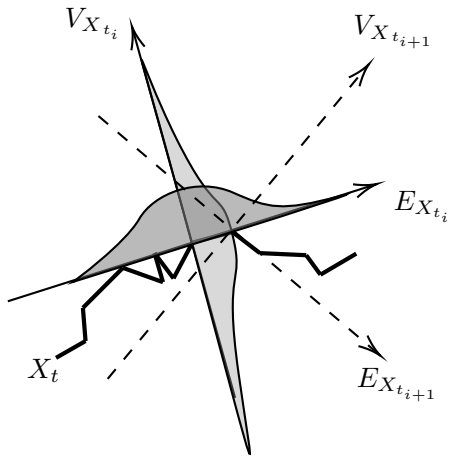

\subsection{Related work}
In Euclidean space, simulation of conditioned diffusion processes has been treated extensively in the literature in works including \cite{durham2002,Stuart,BeskosPapaspiliopoulosRobertsFearnhead,delyonhu,beskos-mcmc-methods,GolightlyWilkinsonChapter,LinChenMykland,bridgesim3,bladt2016, whitaker2017,bridgesim6,bridgesim5, bierkens2020piecewise, mider2021continuous}.
The use of explicit score approximations has extended bridge simulations to Riemannian manifolds, see \cite{jensen2019simulation,JS21,jensen2022discrete,buiInferencePartiallyObserved2023,corstanjeSimulatingConditionedDiffusions2024}. Instead of using explicit score approximations, the work \cite{HdBDT21} has demonstrated how score matching in \cite{hyvarinenEstimationNonNormalizedStatistical2005,vincentConnectionScoreMatching2011} can be exploited to train a neural network to approximate the score and use this in the simulation. Score matching and generative modelling on manifolds has been the focus of \cite{huangRiemannianDiffusionModels2022,bortoliRiemannianScoreBasedGenerative2022}.
In this paper, we build on elements of these ideas to show how a neural network can learn an approximated score on sub-Riemannian manifolds, which subsequently can be used for bridge sampling.

\subsection{Paper outline}
In Section~\ref{sec:geometry_diffusion}, we discuss the needed background material for the paper, and, in Section~\ref{sec:bridge}, we provide further details on bridge processes and their time reversals in the sub-Riemannian setting linking their generators and the score. We apply this in Section~\ref{sec:score} to construct the neural network score approximator using divergence loss, followed by a derivation of the corresponding denoising loss in the sub-Riemannian context in Section~\ref{sec:denoising}. 
Combined together, the stochastic differential equation for the conditioned process and the score approximation allow for numerical simulations of the bridge processes, which we use in Section~\ref{sec:experiments} to exemplify score simulations and bridge sampling on the Heisenberg group.

\section{Diffusion processes on sub-Riemannian manifolds}
\label{sec:geometry_diffusion}
We here discuss relevant background material on sub-Riemannian geometry and diffusion processes on sub-Riemannian manifolds needed in the remainder of the paper.
\subsection{Sub-Riemannian manifolds}
Let $M$ be a connected manifold. We consider a fixed subbundle $E \subseteq TM$ that we denote \emph{the horizontal bundle}.
We refer to the elements of $E$ as \emph{horizontal vectors} and we use $\calX(E)$ to denote the space of vector fields that take values in $E$. Let $g = \langle \cdot , \cdot \rangle_g$ be a smoothly varying inner product defined only on $E$. The triple $(M, E,g)$ is called \emph{a sub-Riemannian manifold}, and the smoothly varying tensor $g$ is called \emph{a sub-Riemannian metric}. Such an inner product gives rise to \emph{a sharp map}
$$\sharp\colon T^*M \to E \qquad \text{defined by} \qquad \alpha(v) = \langle \sharp \alpha, v\rangle_g, \qquad \alpha \in T^*M, v \in E.$$
Note that $\sharp$ is not invertible when $E \neq TM$. A sub-Riemannian metric $g$ can also be described in terms of its cometric $g^*$.
This cometric $g^*=\langle \cdot , \cdot \rangle_{g^*}$ is defined for any pair of covectors $\alpha$, $\beta \in T_x^*M$ for $x \in M$ by
$$\langle \alpha, \beta \rangle_{g^*} = \langle \sharp \alpha, \sharp \beta \rangle_{g}.$$
Whilst the sub-Riemannian cometric $g^*$ is defined on all of $T^*M$, it is degenerate whenever $E \neq TM$ because it vanishes on $\ker \sharp = \Ann(E)$, which consists of all covectors vanishing on $E$.

We say that an absolutely continuous curve $\gamma\colon[a,b]\to M$ is \emph{horizontal} if it is the case that $\dot \gamma(t) \in E_{\gamma(t)}$ for almost every $t \in [a,b]$. Such a curve has a well-defined length given by
$$\length(\gamma) = \int_a^b \| \dot \gamma(t)  \|_g \dd t :=\int_a^b \langle \dot \gamma(t) , \dot \gamma(t) \rangle_g^{1/2} \dd t.$$
Furthermore, the metric defines a distance $d_g\colon M\times M\to\R$ by setting $d_g(x,y)$ for $x,y\in M$ to be the infimum over the lengths of all those horizontal curves which connect $x$ to $y$. In order to guarantee the property that $d_g(x,y)$ is finite for all $x,y\in M$, we assume for the remainder of the paper that $E$ is \emph{bracket-generating}, that is, we suppose that all directional derivatives can be recreated from $E$. To be more precise, let us define, for any $x\in M$,
$$\bar{E}_x = \spn \left\{ [\sigma_1,[\sigma_2,\dots, [\sigma_{l-1},\sigma_{l}]\cdots ]] (x) : \sigma_j \in \calX(E),\enspace j\in\{1,\dots, l\},\enspace l\geq 1 \right\},$$
where we interpret the bracket for the case $l=1$ simply as $\sigma_1$. The vector bundle $E$ is then said to be bracket-generating if $\bar{E} = TM$. The Chow--Rashevski{\u\i} theorem in \cite{Cho39,Ras38} tells us that the bracket-generating assumption is a sufficient condition not only for $d_g$ to be finite, but furthermore for its metric topology to have the same open sets as the original topology of the manifold $M$.

Observe that we can consider a Riemannian manifold $(M,g)$ as a special case of a sub-Riemannian manifold $(M,E,g)$ with $E = TM$.

\subsection{Sub-Laplacians and diffusion processes}
We consider a sub-Riemannian manifold $(M, E, g)$.
For a function $f \in C^\infty(M)$, define $\nabla^E f = \sharp \dd f \in \calX(E)$, \emph{the horizontal gradient} of $f$. In the special case where $E =TM$, we simply write $\nabla^E f= \nabla f$. Assuming that we also have a smooth volume density $\db\mu$ on $M$, we define a corresponding second order operator $\Delta = \Delta_{E,g,\db\mu}$ by
$$\Delta f = \dv_{\db\mu}(\nabla^E f), \qquad f\in C^\infty(M).$$
The operator $\Delta$ is \emph{the sub-Laplacian} of $(M,E,g,\db\mu)$. It is the unique second order operator on $M$ satisfying, for any pair of compactly supported functions $f_1, f_2 \in C^\infty_0(M)$,
\begin{align*}
    \langle \nabla^E f_1, \nabla^E f_2 \rangle_g & = \frac{1}{2} \left( \Delta (f_1 f_2) - f_1 \Delta f_2 - f_2 \Delta f_1\right),\\
    \int_M \langle \nabla^E f_1, \nabla^E f_2 \rangle_g \dd\mu & = - \int_M f_1 \Delta f_2 \dd\mu = - \int_M f_2 \Delta f_1 \dd\mu.
\end{align*}
If $\rank E=k$ and $(\sigma_1, \dots, \sigma_k)$ is a local orthonormal frame for $E$ with respect to the sub-Riemannian metric $g$, then
$$\Delta = \sum_{j=1}^k (\sigma_j^2 + (\dv_{\db\mu} \sigma_j) \sigma_j).$$
If the subbundle $E$ is a proper subbundle of the tangent bundle $TM$, then the operator $\Delta$ is not elliptic. However, if $E$ is bracket-generating, then both the operator~$\Delta$ and its heat operator $\partial_t - \frac{1}{2} \Delta$ are hypoelliptic, see H\"ormander~\cite{Hor67}.

For the special case where $E  = TM$ and $(M,g)$ is a Riemannian manifold, the operator~$\Delta$ is called the Witten--Laplacian. If $\dd\mu_g$ is the Riemannian volume form of the Riemanian metric $g$ and $\Delta_g$ is the Laplace--Beltrami operator corresponding to $\dd\mu_g$, then there exists a function $\phi$ on $M$ such that $\db\mu = \e^\phi \db\mu_g$, which is related to $\Delta$ by
$$\Delta = \Delta_g + \nabla \phi,$$
where $\nabla \phi$ is defined with respect to the metric $g$.

For the remainder of the paper, we consider an operator $L$ of the form
\begin{displaymath}
    L = \Delta + 2Z,
\end{displaymath}
where $\Delta$ is the sub-Laplacian of $(M,E,g,\db\mu)$ and $Z$ is a smooth vector field on $M$. For such a second order partial differential operator $L$ and $x_0\in M$, there exists a unique semi-martingale $(X_t)_{\zeta>t\geq 0} = (X_t^{x_0})_{\zeta>t\geq 0}$ on $M$ starting from $x_0$, with infinitesimal generator $\frac{1}{2}L$ and defined up to some maximal explosion time $\zeta$, see e.g. Elworthy~\cite{elworthy}, \'Emery~\cite{emery} and Hsu~\cite{hsu}. Throughout the paper, we will assume that the process does in fact not explode, that is, $\zeta(x_0)=\infty$ almost surely. For details on some criteria for non-explosion, see e.g. Bichteler and Jacod~\cite{bichtelerjacod} or Norris~\cite{norris86} as well as Grigor’yan~\cite{grigor2013stochastic} and \cite{grong2019stochastic} for geometric conditions.

\begin{remark}[Choice of measure] \label{re:ChoiceMeasure}
Consider $L = \Delta_{\db\mu}+2Z$ for $\Delta_{\db\mu} = \Delta_{E,g,\db\mu}$ defined with respect to a sub-Riemannian structure $(E,g)$ and a smooth volume density $\db\mu$ on a manifold $M$. We observe that if $\db\nu = \e^{\phi} \db\mu$ is another smooth measure on $M$ then $\Delta_{\db\nu} = \Delta_{E,g,\db\nu}$ satisfies
\begin{displaymath}
    \Delta_{\db\nu}=\Delta_{\db\mu} + \nabla^E \phi,
\end{displaymath}
and hence, we can write
\begin{displaymath}
    L = \Delta_{\db\nu} - \nabla^E \phi + 2Z =: \Delta_{\db\nu} + 2\tilde Z.
\end{displaymath}
It follows that we are free to change our definition of $\db\mu$ as long as we update the vector field $Z$ accordingly.

Conversely, if we have $L = \Delta + 2Z$ where $2Z = \nabla^E \phi$ is a horizontal gradient then $L = \Delta_{E,g,\db\tilde \mu}$ is the sub-Laplacian with respect to some measure $\db\tilde \mu$ on $M$.
\end{remark}

\subsection{Local description of the operators}
The following discussion concerning the local description of Brownian motion on a Riemannian manifold provides a motivation for studying the more general case of sub-Riemannian manifolds. Let $(W_t)_{t\geq 0}$ be a standard Brownian motion on $\mathbb{R}^d$ and consider the unique strong solution $(X_t)_{t\geq 0}$, assumed to exist for all times, to the Stratonovich stochastic differential equation
\begin{equation*}
\db X_t = \sigma(X_t) \circ \db W_t + \sigma_0(X_t) \dd t, \end{equation*}
where $\sigma_0 \colon \mathbb{R}^d \to \mathbb{R}^d$ is a smooth vector field and $\sigma\colon\mathbb{R}^d \to \mathbb{R}^{d\times d}$, $x \mapsto \sigma(x) = (\sigma^i_{j}(x))$ a smooth function. If $\sigma \sigma^\top(x) = (\sum_{l=1}^d \sigma^i_{l}\sigma^j_{l}(x))$ is positive definite for all $x\in\R^d$, then we can define a Riemannian metric $g$ on $\R^d$ by setting $g = (g_{ij}) = (\sigma \sigma^\top )^{-1}$. The infinitesimal generator $\frac{1}{2} L$ of the stochastic process $(X_t)_{t\geq 0}$ is then in terms of the Laplace--Beltrami operator $\Delta_g$ given by
$$L = \sum_{i,j,l=1}^d \sigma^i_{l} \sigma^j_{l}\partial_{x^i} \partial_{x^j} + \sum_{l=1}^d \left(2\sigma_0^l + \sum_{i,j=1}^d  \sigma^i_{j} (\partial_{x^i} \sigma^l_{j}) \right)\partial_{x^l} =: \Delta_g + 2Z,$$
where $\Delta_g = \sum_{i,j,l=1}^d \sigma_l^i \sigma_l^i \partial_{x^i} \partial_{x^j} -\sum_{i,j,l=1}^d g^{ij} \Gamma_{ij}^l \partial_{x^l}$,
\begin{displaymath}
    2Z = \sum_{l=1}^d 2Z^l \partial_{x^l} = \sum_{l=1}^d \left( 2 \sigma^l_0 + \sum_{i,j=1}^d \left(   \sigma^i_{j} (\partial_{x^i} \sigma^l_{j}) +g^{ij} \Gamma_{ij}^l   \right)    \right) \partial_{x^l}
\end{displaymath}
and $\Gamma_{ij}^l$ are the Christoffel symbols with respect to the Riemannian metric $g$ on $\R^d$.
 
If $\sigma \sigma^\top$ is merely positive semi-definite of constant rank, then we cannot invert it. However, we can still use $g^* = \sigma \sigma^\top$ as a cometric for a sub-Riemannian structure $(E,g)$. If we consider $g^*$ as a matrix, which is the same as identifying it with $\sharp$, then $E$ will be the image of $g^*$.

\subsection{Densities and conditional probability}
Let $\Delta$ be the sub-Laplacian of the sub-Riemannian manifold $(M, E,g)$ equipped with the smooth volume density $\db\mu$ and let $Z$ be a smooth vector field on $M$. Define $(X_t)_{t\geq 0}$ to be the unique stochastic process starting from $x_0\in M$ and having generator $\frac{1}{2} L = \frac{1}{2} \Delta + Z$. Recall that we assume this stochastic process does not explode.
For $T>0$ given, let $\pp=\pp^{x_0,T}$ be the measure on the path space $C([0,T], M)$ induced by $(X_{t})_{t\in [0,T]}$. For $s,t\in[0,T]$ with $s<t$, we further let $\pp_{s,t}$ denote the push-forward measure on $M \times M$ of the map $(x_{r})_{r \in [0,T]} \mapsto (x_s, x_t)$. Similarly, we set $\pp_t$ to be the push-forward measure of the evaluation $(x_{r})_{r \in [0,T]} \mapsto x_t$.
Since $E$ is assumed to be bracket-generating, it follows by~\cite{Hor67,StVa72} that $\pp_{s,t}$ and $\pp_t$ have smooth densities with respect to $\db\mu \otimes \db \mu$ and $\db \mu$, respectively.
For any $x,y\in M$ and for any $s,t\in[0,T]$ with $s<t$, we write
$$\db\pp_{s,t}(x,y) = p_s(x) p_{s,t}(x,y) (\db\mu \otimes \db\mu)(x,y)
\quad\text{and}\quad \db\pp_t(x) = p_t(x) \dd\mu(x),$$
where $p_{s,t}\colon M\times M\to\R$ is the transition probability density such that
\begin{equation*}
p_t(y)  =\int_M p_s(x) p_{s,t}(x,y) \dd\mu(x) \quad\text{and}\quad p_t(y) = p_{0,t}(x_0,y).
\end{equation*}
Furthermore, since $L$ is a time-homogeneous operator, we know that
$$p_{s,t}(x,y) = p_{0,t-s}(x,y) =: p_{t-s}(x,y).$$
From our assumption that $E$ is bracket-generating, it further follows from~\cite{StVa72} that $p_t(y) > 0$ and $p_{s,t}(x,y) >0$ for all $x,y\in M$ and all $s,t\in[0,T]$ with $s<t$. By definition, for any open set $U\subseteq M$ and for $s,t\in(0,T]$ with $s<t$, we have
$$\pp(X_t \in U) = \int_{U} p_t(y) \dd\mu(y)
\quad\text{and}\quad \pp(X_t \in U | X_s =x) = \int_U p_{s,t}(x,y) \dd\mu(y).$$

Let $L^*$ denote the adjoint of the operator $L$ with respect to the smooth volume density $\db \mu$, which is given by
\begin{displaymath}
    L^* = \Delta - 2Z - 2\dv_{\db\mu} Z.
\end{displaymath}
From the definition of the generator, we have the relation
$\partial_t \E[f(X_t)] = \frac{1}{2} \E[Lf(X_t)]$ for $f\in C^\infty(M)$, which in turn implies that, for $x,y\in M$ and $t\in[0,T]$,
\begin{displaymath}
    \left(\partial_t- \frac{1}{2} L^*\right) p_t(y) = 0, \quad \left(\partial_t - \frac{1}{2} L^{y,*}\right) p_{s,t}(x,y) =0
\end{displaymath}
as well as
\begin{equation}\label{eq:3rdidentity}
    \left(\partial_s + \frac{1}{2} L^x\right) p_{s,t}(x,y) = 0,
\end{equation}
subject to initial conditions given by $p_0(y) = \delta_{x_0}(y)$ and $p_{t,t} (x,y)= \delta_x(y)$. For the details, see e.g. Haussmann and Pardoux~\cite[p.~1191]{haussmann1986time}. 

\begin{remark}\label{remark:adjointprocess}
If we define $p_{s,t}^*(x,y) = p_{s,t}(y,x)$ and $P_{t}^*f(x) = \int_M f(y) p_{t}^*(x,y)\dd\mu(y)$ then by the above equations, we have $(\partial_t - \frac{1}{2}L^*) P_t f =0$ and $P_0 f = f$. It follows that as long as $\dv_{\db\mu} Z =0$ there then exists a stochastic process $(X_t^*)_{t\geq 0}$ with generator $\frac{1}{2} L^*$ and such that
$$p_{s,t}^*(x,y) = p_{t-s}^*(x,y) = p_{t-s}(y,x)$$
are its transition probability densities.
\end{remark}

\section{Bridge processes and time-reversed processes }
\label{sec:bridge}
We determine the generator for Riemannian and sub-Riemannian time-reversed diffusion bridge processes, with our main objective being to demonstrate how this generator is related to the score defined in \eqref{eq:score}.

\subsection{Bridge processes}
As before, let $(X_t)_{t\geq 0}$ on $(M,E,g,\db\mu)$ be the stochastic process with generator $\frac{1}{2}L=\frac{1}{2}\Delta +Z$ and initial value $x_0\in M$. For $T>0$ and $x_T\in M$, the diffusion bridge process $(Y_t)_{t\in[0,T]}$ on $(M,E,g,\db\mu)$ obtained by conditioning $(X_t)_{t\in[0,T]}$ on $X_T=x_T$ can formally be defined using disintegration of measure techniques, see e.g.~\cite{BMN21} and \cite{Hab18}. If there exists a unique path of minimal energy connecting the starting point $x_0$ to the final point $x_T$ then in small time, as shown in~\cite{BN22}, the laws of the diffusion bridge processes concentrate near the unique path of minimal energy. This result has been extended in~\cite{neel2020uniform} with $x_0$ and $x_T$ being connected by multiple minimal paths, subject to the additional assumption that none of them contains a segment which is so-called abnormal. We remark that this concentration property for the diffusion bridge processes in small time heavily relies on the underlying geometric structure and, as demonstrated in~\cite{Hab19}, may fail for more general diffusion bridge processes.

Using the formulation of the Doob $h$-transform, see e.g. \cite[Chapter~7.5]{sarkka2019applied}, an expression for the generator of the diffusion bridge process $(Y_t)_{t\in[0,T]}$ in terms of the logarithmic derivative of the transition probability densities $p_{t,T} = p_{T-t}$ and the generator $\frac{1}{2}L$ of the unconditioned process $(X_t)_{t\in[0,T]}$ can be obtained.
\begin{lemma} \label{lemma:Bridge}
The diffusion bridge process $(Y_t)_{t\in[0,T]}$ has the infinitesimal generator 
$$\frac{1}{2} L + \nabla^{x,E} \log p_{T-t}(\cdot , x_T).$$
\end{lemma}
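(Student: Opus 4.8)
The plan is to realise $(Y_t)_{t\in[0,T]}$ as a Doob $h$-transform of $(X_t)$ and then to compute the transformed generator directly using the sub-Riemannian Leibniz rule for $L$. First I would set $h_t(x):=p_{T-t}(x,x_T)=p_{t,T}(x,x_T)$, the density of reaching $x_T$ at time $T$ starting from $x$ at time $t$. The disintegration and Doob-transform formalism referenced in the excerpt identifies the bridge as the (time-inhomogeneous) Markov process with transition densities
$$p^Y_{s,t}(x,y)=\frac{p_{t-s}(x,y)\,h_t(y)}{h_s(x)},\qquad 0\le s<t\le T,$$
which integrates to one by Chapman--Kolmogorov and reproduces the conditional law $\pp(\,\cdot\mid X_T=x_T)$. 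The structural input I would use is that $h_t$ is space-time harmonic for $\tfrac12 L$: setting $t=T$ and $y=x_T$ in \eqref{eq:3rdidentity}, and recalling $p_{t,T}=p_{T-t}$, gives
$$\left(\partial_t+\tfrac12 L^x\right)h_t(x)=0.$$

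Next I would compute the generator of the $h$-transformed space-time process, whose space-time generator sends $F\mapsto h^{-1}(\partial_t+\tfrac12 L)(hF)$. The only nontrivial ingredient is the product rule for the second-order operator $L=\Delta+2Z$. Since $Z$ is a derivation and the carr\'e du champ identity from the excerpt yields $\Delta(hF)=h\,\Delta F+F\,\Delta h+2\langle\nabla^E h,\nabla^E F\rangle_g$, I obtain
$$\tfrac12 L(hF)=\tfrac12 h\,LF+\tfrac12 F\,Lh+\langle\nabla^E h,\nabla^E F\rangle_g.$$
Substituting this into $h^{-1}(\partial_t+\tfrac12 L)(hF)$ and collecting the terms proportional to $F$, the combination $F\,h^{-1}\!\left(\partial_t h+\tfrac12 Lh\right)$ vanishes by space-time harmonicity, leaving
$$\partial_t F+\tfrac12 LF+\tfrac1h\langle\nabla^E h,\nabla^E F\rangle_g.$$
The leading $\partial_t F$ is exactly the time-derivative part of the space-time generator, so the spatial generator of $(Y_t)$ at time $t$ is $\tfrac12 L+\tfrac1h\langle\nabla^E h,\nabla^E\,\cdot\,\rangle_g$.

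Finally I would rewrite the extra first-order term. Using $\tfrac1h\nabla^E h=\sharp\,\tfrac{\db h}{h}=\sharp\,\db\log h=\nabla^E\log h$ and the defining property $\db F(V)=\langle\nabla^E F,V\rangle_g$ of the sharp map for horizontal $V$, the term $\tfrac1h\langle\nabla^E h,\nabla^E F\rangle_g$ is the horizontal vector field $\nabla^E\log h_t$ acting on $F$ as a derivation. Since $h_t=p_{T-t}(\cdot,x_T)$, this is precisely $\nabla^{x,E}\log p_{T-t}(\cdot,x_T)$, giving the claimed generator $\tfrac12 L+\nabla^{x,E}\log p_{T-t}(\cdot,x_T)$.

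I expect the genuine content to be the clean carr\'e du champ computation above; the main obstacle is instead foundational, namely justifying rigorously that the conditioning is well posed and that the bridge is the $h$-transform with the stated transition density. In the sub-Riemannian setting this rests on hypoellipticity: H\"ormander's theorem together with \cite{StVa72} guarantees smoothness and strict positivity of $p_t$ and $p_{s,t}$, so that $h_t$ is a smooth, strictly positive space-time harmonic function and the $h$-transform is legitimate. For this step I would cite the disintegration results \cite{BMN21,Hab18} and the Doob-transform formulation \cite{sarkka2019applied} rather than reprove them, and verify only that the formal manipulations (product rule and harmonicity) are valid for the smooth densities at hand.
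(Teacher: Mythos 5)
Your proposal is correct and follows essentially the same route as the paper: both realise the bridge as the Doob $h$-transform with $h_t(x)=p_{t,T}(x,x_T)$, invoke the space-time harmonicity $(\partial_t+\tfrac12 L)h_t=0$ from \eqref{eq:3rdidentity}, and extract the drift $\nabla^E\log h_t$ from the formula $h_t^{-1}(\partial_t+\tfrac12 L)(fh_t)$ via the carr\'e du champ identity for $\Delta$. The only cosmetic difference is that the paper derives that formula as the limit $\lim_{s\downarrow 0}s^{-1}\bigl(\mathbb{E}[f(X^h_{t+s})\,|\,X^h_t=x]-f(x)\bigr)$ rather than quoting the space-time generator of the transformed process directly.
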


\begin{proof}
    For $t\in[0,T)$, let the function $h_t\colon M\to\R$ be defined by $h_t(x) = p_{t,T}(x, x_T)$.
    According to~\eqref{eq:3rdidentity}, we know that
    \begin{equation}\label{eq:harmonic}
        \left(\partial_t + \frac{1}{2} L\right)h_t = 0,
    \end{equation}
    and by the Chapman–Kolmogorov equation, we further have, for $s\in(0,T-t)$,
    \begin{equation}\label{eq:CK}
        h_t(x) = \int_M p_{t,t+s}(x,y) h_{t+s}(y)\dd\mu(y).
    \end{equation}
    For any open set $U\subseteq M$, we obtain
    \begin{displaymath}
        \pp\left(X_{t+s}\in U|X_t=x,X_T=x_T\right)
        =\int_U\frac{p_{t,t+s}(x,y)p_{t+s,T}(y,x_T)}{p_{t,T}(x,x_T)}\dd\mu(y).
    \end{displaymath}
    Hence, the diffusion bridge process $(Y_t)_{t\in[0,T]}$ on $(M,E,g,\db\mu)$ has the same law as the Markov process $(X_t^h)_{t\in[0,T]}$ starting from $X_0^h = x_0$ whose transition kernel $p^h$ is given by, for $x,y\in M$,
    \begin{equation}\label{eq:newdensity}
        p_{t,t+s}^h(x,y) = p_{t,t+s}(x,y) \frac{h_{t+s}(y)}{h_t(x)}.
    \end{equation}
    It follows that, with $p_t^h(y) = p_{0,t}^h(x_0, y)$, we have
    $$\pp(Y_t \in U) = \int_U p_t^h(y) \dd\mu(y).$$
    This is indeed all well-defined due to $h_t(x)=p_{t,T}(x,x_T)>0$ for all $x\in M$ and all $t\in[0,T)$ 
    and, by the space-time regularity property~\eqref{eq:CK}, further defines a genuine transition probability density and thus a Markov process.
    The expression~\eqref{eq:newdensity} implies that, for $f\in C^\infty_0(M)$,
    \begin{displaymath}
        \mathbb{E}[f(X_{t+s}^h)|X_t^h =x]
        = \frac{\mathbb{E}[f(X_{t+s}) h_{t+s}(X_{t+s})| X_t =x]}{h_t(x)}.
    \end{displaymath}
    Using this formula, we can find the generator $\frac{1}{2} L^h_t$ for the Markov process $(X_t^h)_{t\in[0,T]}$, and hence for $(Y_t)_{t\in[0,T]}$, by first deducing that
    \begin{align*}
    \frac{1}{2}L^h_t f(x)
    & = \lim_{s \downarrow 0} \frac{\mathbb{E}[f(X_{t+s}^h)| X_t^h = x] -f(x)}{s} \\
    & = \lim_{s \downarrow 0} \frac{\mathbb{E}[f(X_{t+s})h_{t+s}(X_{t+s})| X_t = x] -f(x)h_t(x)}{s h_t(x)} \\ 
    & = \frac{(\partial_t+\frac{1}{2}L) (f h_t)(x)}{h_t(x)}.
    \end{align*}
    By applying~\eqref{eq:harmonic} and using $\frac{1}{2}L=\frac{1}{2}\Delta +Z$, we conclude that
    \begin{displaymath}
        \frac{(\partial_t+\frac{1}{2}L) (f h_t)}{h_t}
        =\frac{1}{2}Lf+ \frac{\langle \nabla^E f, \nabla^E h_t \rangle}{h_t},
    \end{displaymath}
    which establishes $\frac{1}{2}L^h_t = \frac{1}{2}L + \nabla^E \log h_t$. Due to $h_t(x) = p_{t,T}(x, x_T)=p_{T-t}(x,x_T)$, the claimed result follows.
\end{proof}

\subsection{Time-reversals} \label{sec:TimeReversals}
We consider a general process $(Y_t)_{t\geq 0}$ on $(M,E,g,\db\mu)$ with a time-dependent generator $\frac{1}{2}L_t = \frac{1}{2}\Delta + Z_t$ for a time-dependent vector field $Z_t$ and with initial value $x_0\in M$. We shall again assume non-explosion. For $T>0$, we denote the associated measure induced on the path space $C([0,T],M)$ by $\qq$. Let us further define $q_t$ on $M$ for $t\in [0,T]$ through, for any open set $U\subseteq M$,
\begin{equation}\label{eq:bridgedensity}
    \qq(Y_t \in U) = \int_U q_t(y) \dd \mu(y).
\end{equation}

We are now interested in looking at the time-reversed process $(\bar{Y}_t)_{t\in[0,T]}$ given by $\bar{Y}_t = Y_{T-t}$. We observe the following about its infinitesimal generator.
\begin{lemma} \label{lemma:Denoise}
The time-reversed processes $(\bar{Y}_t)_{t\in[0,T]}$ defined by setting $\bar{Y}_t = Y_{T-t}$ for $t\in[0,T]$ has infinitesimal generator
\begin{displaymath}
    \frac{1}{2}\bar{L}_t = \frac{1}{2}\Delta  - Z_{T-t} + \nabla^E \log q_{T-t}.
\end{displaymath}
Furthermore, we have that $\nabla^E q_t$ is in $L^2(E; g,\db\mu)$ for $t\in[0,T]$.
\end{lemma}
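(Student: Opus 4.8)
The plan is to determine $\bar L_t$ exactly in the spirit of the proof of Lemma~\ref{lemma:Bridge}: first write down the transition kernel of the reversed process via Bayes' rule, and then extract its generator. Since $\bar Y_t = Y_{T-t}$, for $0\le s<t\le T$ the reversed transition density is
\begin{equation*}
\bar p_{s,t}(x,y)=\frac{q_{T-t}(y)\,p_{T-t,T-s}(y,x)}{q_{T-s}(x)},
\end{equation*}
where $p_{\cdot,\cdot}$ denotes the (time-inhomogeneous) forward transition density of $(Y_t)_{t\in[0,T]}$ and $q_t$ is as in~\eqref{eq:bridgedensity}. This is legitimate because $q_t>0$ and $p_{s,t}>0$ by the bracket-generating assumption, just as in Lemma~\ref{lemma:Bridge}.

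Next I would compute $\tfrac12\bar L_s\psi=\lim_{r\downarrow 0}r^{-1}(\bar P_{s,s+r}\psi-\psi)$ for $\psi\in C_0^\infty(M)$, where $\bar P$ is the propagator associated with $\bar p$. It is cleanest to test against $\phi\in C_0^\infty(M)$ and identify the weak limit. Writing $u=T-s$ and using $\int_M\phi(x)p_{u-r,u}(y,x)\dd\mu(x)=(P_{u-r,u}\phi)(y)$ together with the expansions $q_{u-r}=q_u-\tfrac r2 L_u^*q_u+o(r)$ (forward Kolmogorov equation $\partial_a q_a=\tfrac12 L_a^*q_a$) and $P_{u-r,u}\phi=\phi+\tfrac r2 L_u\phi+o(r)$ (backward Kolmogorov equation), I obtain
\begin{equation*}
\int_M\phi\,q_u\,\Bigl(\tfrac12\bar L_s\psi\Bigr)\dd\mu=\tfrac12\int_M\phi\bigl(L_u^*(q_u\psi)-\psi\,L_u^* q_u\bigr)\dd\mu,
\end{equation*}
so that $\bar L_s\psi=q_u^{-1}\bigl(L_u^*(q_u\psi)-\psi L_u^* q_u\bigr)$. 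Substituting $L_u^*=\Delta-2Z_{u}-2\dv_{\db\mu}Z_{u}$ and using the Leibniz identities $\Delta(q_u\psi)-\psi\Delta q_u=q_u\Delta\psi+2\langle\nabla^E q_u,\nabla^E\psi\rangle_g$ and $Z_u(q_u\psi)-\psi Z_u q_u=q_u Z_u\psi$, the zeroth-order terms cancel and dividing by $q_u$ leaves
\begin{equation*}
\bar L_s\psi=\Delta\psi-2Z_{u}\psi+2\langle\nabla^E\log q_u,\nabla^E\psi\rangle_g,
\end{equation*}
which is the claimed generator after setting $u=T-s$ and reading $\langle\nabla^E\log q_{T-s},\nabla^E\,\cdot\,\rangle_g$ as the first-order operator $\nabla^E\log q_{T-s}$. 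The delicate point is the $r\downarrow 0$ limit: the kernel $p_{u-r,u}(y,x)$ concentrates on the diagonal, so I phrase the computation variationally (against $\phi$) precisely to avoid differentiating through the emerging delta, justifying the interchange via smoothness and positivity of the densities and the two Kolmogorov equations.

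For the $L^2$ statement, the plan is to use the Dirichlet energy identity obtained by differentiating $t\mapsto\int_M q_t^2\,\db\mu$. For $t>0$ the density $q_t$ is smooth, strictly positive, and satisfies $\int_M q_t\,\db\mu=1$; by hypoellipticity together with the standard Gaussian-type upper bounds for the sub-elliptic heat kernel it is bounded, so $q_t\in L^1\cap L^\infty\subseteq L^2(M,\db\mu)$. Inserting the forward equation $\partial_t q_t=\tfrac12 L_t^*q_t$ and integrating by parts via $\int_M(L_t q_t)q_t\,\db\mu=-\int_M\langle\nabla^E q_t,\nabla^E q_t\rangle_g\,\db\mu-\int_M(\dv_{\db\mu}Z_t)q_t^2\,\db\mu$ yields
\begin{equation*}
\frac{\db}{\db t}\int_M q_t^2\,\db\mu=-\int_M\langle\nabla^E q_t,\nabla^E q_t\rangle_g\,\db\mu-\int_M(\dv_{\db\mu}Z_t)\,q_t^2\,\db\mu.
\end{equation*}
Since the left-hand side and the last integral are finite, the Dirichlet energy $\int_M\langle\nabla^E q_t,\nabla^E q_t\rangle_g\,\db\mu$ is finite, i.e.\ $\nabla^E q_t\in L^2(E;g,\db\mu)$.

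The main obstacle here is that $M$ is non-compact, so the integration by parts producing this identity is only formal. I would make it rigorous by inserting a compactly supported cut-off $\chi_n\uparrow 1$, controlling the resulting commutator terms $\langle\nabla^E\chi_n,\nabla^E q_t^2\rangle_g$ using the decay of $q_t$ and $\nabla^E q_t$ coming from the heat-kernel bounds, and then passing to the limit by monotone convergence. Care is also needed near $t=0$, where $q_0=\delta_{x_0}$ is singular and the statement is to be read for $t\in(0,T]$; for such fixed $t$, finiteness for \emph{every} $t$ rather than almost every $t$ follows from the smoothing of the sub-Laplacian semigroup, which places $q_t$ in the domain of $(-\Delta)^{1/2}$ and identifies $\|(-\Delta)^{1/2}q_t\|_{L^2}^2$ with the Dirichlet energy above.
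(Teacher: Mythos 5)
Your identification of the generator is correct and rests on exactly the same algebraic core as the paper's proof: the identity $L_u^*(q_u\psi)-\psi L_u^*q_u = q_u(\Delta-2Z_u)\psi + 2\langle \nabla^E q_u,\nabla^E\psi\rangle_g$ (with the zeroth-order term $-2\dv_{\db\mu}Z_u$ cancelling), combined with the forward equation for $q$ and the backward equation for the propagator. Where you differ is in the packaging. You write the reversed transition kernel explicitly via Bayes' rule and take a weak short-time limit $r\downarrow 0$, expanding $q_{u-r}$ and $P_{u-r,u}\phi$ to first order; the paper never takes an infinitesimal limit at all, but instead characterises the generator through the martingale property, proving
$\E\bigl[\bigl(f(Y_t)-f(Y_s)+\tfrac12\int_s^t(\bar L_{T-r}f)(Y_r)\dd r\bigr)\phi(Y_t)\bigr]=0$
by integrating the two Kolmogorov equations over the whole interval $[s,t]$ (via the quantity $\E[f(Y_r)\Phi_{r,t}(Y_r)]$ with $\Phi_{r,t}(x)=\E[\phi(Y_t)\,|\,Y_r=x]$) and then applying the same adjoint identity under the time integral. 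The integrated form buys precisely what you flag as the delicate point of your argument: there is no $o(r)$ remainder to control uniformly and no interchange of limit and integral to justify, only the fundamental theorem of calculus in $r$. Your route is legitimate, but to be complete you would need to argue that the $o(r)$ terms are integrable against $\phi\,q_u\dd\mu$ uniformly in $r$, which is essentially the content the integrated argument sidesteps.

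On the $L^2$ claim, note that the paper's own proof does not actually verify it; it is carried over from the Haussmann--Pardoux framework being adapted. Your Dirichlet-energy argument is a sensible way to supply it, but as written it has loose ends: the cut-off error terms $\langle\nabla^E\chi_n,\nabla^E q_t^2\rangle_g$ are not sign-definite, so monotone convergence does not apply directly (Fatou applied to the nonnegative Dirichlet term is what you want), and both $q_t\in L^2(\db\mu)$ and the finiteness of $\int_M(\dv_{\db\mu}Z_t)\,q_t^2\dd\mu$ rely on Gaussian-type upper bounds and control of $\dv_{\db\mu}Z_t$ that are plausible but not established in the generality of the lemma. You are also right that the statement must be read for $t\in(0,T]$, since $q_0$ is a Dirac mass; the paper states $t\in[0,T]$ without comment.
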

To produce our result, we modify the proof of~\cite[Theorem~2.1]{haussmann1986time} for a coordinate independent formulation.

\begin{proof}
In order to prove that the time-reversed process $(\bar{Y}_t)_{t\in[0,T]}$ has infinitesimal generator $\frac{1}{2}\bar{L}_t$, it suffices to show that, for $s,t\in [0,T]$ with $s<t$ and for any compactly supported function $f \in C^\infty_0(M)$, the mapping
$$t \mapsto f(\bar{Y}_t) - f(\bar{Y}_s) - \frac{1}{2} \int_s^t (\bar{L}_r f)(\bar{Y}_r) \dd r$$
defines a martingale.
As argued in~\cite{haussmann1986time}, the above is equivalent to the statement that, for all $f, \phi \in C^\infty_0(M)$,
$$\mathbb{E}\left[ \left(f(\bar{Y}_t) - f(\bar{Y}_s) - \frac{1}{2} \int_s^t (\bar{L}_r f)(\bar{Y}_r) \dd r \right) \phi(\bar{Y}_s)\right] = 0,$$
which after applying $\bar{Y}_t = Y_{T-t}$ and changing variables in the time integral amounts to
$$\mathbb{E}\left[ \left(f(Y_{T-t}) - f(Y_{T-s}) - \frac{1}{2} \int_{T-t}^{T-s} (\bar{L}_{T-r} f)(Y_r) \dd r \right) \phi(Y_{T-s})\right] = 0.$$
Since $T-t<T-s$ for $s<t$, it therefore suffices to establish that, for all $s,t\in [0,T]$ with $s<t$ and for all $f, \phi \in C^\infty_0(M)$,
$$\mathbb{E}\left[ \left(f(Y_{t}) - f(Y_{s})  + \frac{1}{2} \int_{s}^{t} (\bar{L}_{T-r} f)(Y_{r}) \dd r \right) \phi(Y_{t})\right] = 0.$$

Let $\qq_{s,t}$ with, for $x,y\in M$,
$$\db\qq_{s,t}(x,y)=q_s(x)q_{s,t}(x,y) \left(\db\mu \otimes \db\mu\right)(x,y)$$
denote the push-forward measure of the map $(x_{r})_{r \in [0,T]} \mapsto (x_s,x_t)$. We observe
\begin{align*}
& \E\left[ f(Y_s) \phi(Y_t) \right] = \int_{M \times M} f(x) \phi(y) \dd\qq_{s,t}(x,y) \\
& = \int_M f(x) q_s(x) \left( \int_M q_{s,t}(x,y) \phi(y) \dd\mu(y) \right) \db\mu(x) \\
& =: \int_M f(x) q_s(x) \Phi_{s,t}(x) \dd\mu(x) = \E\left[ f(Y_s) \Phi_{s,t}(Y_s) \right],
\end{align*}
where $\Phi_{s,t}(x) = \int_M q_{s,t}(x,y) \phi(y) \dd\mu(y) = \E[\phi(Y_t)| Y_s = x]$.
By using the properties that $(\partial_r + \frac{1}{2}L_r) \Phi_{r,t} =0$ and $(\partial_r - \frac{1}{2}L_r^*) q_r =0$, we further obtain
\begin{align*}
& \E[f(Y_t) \Phi_{t,t}(Y_t)] - \E[f(Y_s) \Phi_{s,t}(Y_s)]\\
&=\int_s^t\int_M f(x)(q_r(x)\partial_r\Phi_{r,t}(x) + \Phi_{r,t}(x)\partial_r q_r(x))\dd \mu(x)\dd r\\ 
&=\frac{1}{2}\int_s^t \int_M f(x)(-q_r(x)  L_r \Phi_{r,t}(x) + \Phi_{r,t}(x) L_r^* q_r(x))\dd\mu(x)\dd r\\
&=\frac{1}{2}\int_s^t \int_M  \Phi_{r,t}(x) (- L_r^*( f q_r)(x)   + f(x) L_r^* q_r(x))\dd\mu(x)\dd r.
\end{align*}
Since the adjoint $L^*_t$ of the operator $L_t=\Delta+2Z_t$ with respect to $\db\mu$ is given by
$$L^*_t = \Delta - 2Z_t - 2\dv_{\db\mu} Z_t,$$
we have
\begin{align*}
& \frac{1}{2}L^*_t(fq_t) - \frac{1}{2}fL^*_t q_t\\
& = \left(\frac{1}{2}\Delta  - Z_t - \dv_{\db\mu} Z_t\right) (fq_t) - f \left(\frac{1}{2}\Delta  - Z_t - \dv_{\db\mu} Z_t\right) q_t \\
& = q_t \left(\frac{1}{2}\Delta - Z_t\right) f +\langle \nabla^E f, \nabla^E q_t \rangle
= \frac{1}{2}q_t \bar{L}_{T-t} f.
\end{align*}
Hence, it follows that
\begin{align*}
& \E[f(Y_t) \phi(Y_t)] - \E[f(Y_s) \phi(Y_t)]=
  \E[f(Y_t) \Phi_{t,t}(Y_t)] - \E[f(Y_s) \Phi_{s,t}(Y_s)]\\
&= -\frac{1}{2}\int_s^t \int_M q_r(x) \Phi_{r,t}(x) (\bar{L}_{T-r}f)(x)\dd\mu(x)\dd r \\
&= -\frac{1}{2}\int_s^t \int_{M\times M} q_r(x) q_{r,t}(x,y) \phi(y) (\bar{L}_{T-r}f)(x) (\db\mu \otimes \db\mu)(x,y) \dd r \\
& = - \frac{1}{2}\E\left[ \phi(Y_t) \int_s^t (\bar{L}_{T-r}f)(Y_r) \dd r \right],
\end{align*}
as required.
\end{proof}

By combining Lemma~\ref{lemma:Bridge} and Lemma~\ref{lemma:Denoise}, we can determine the infinitesimal generator of the stochastic process obtained by time-reversing the diffusion bridge process corresponding to $(X_t)_{t\geq 0}$ with infinitesimal generator $\frac{1}{2} L = \frac{1}{2} \Delta + Z$ and starting point $x_0\in M$ conditioned on $X_T=x_T$ for $T>0$ and $x_T\in M$. Recall that we use $p_t(x,y) = p_{0,t}(x,y)$ for $x,y\in M$ to denote the transition probability density of the semi-martingale $(X_t)_{t\geq 0}$.
\begin{corollary}\label{cor:timereversedbridge}
    For $(Y_t)_{t\in[0,T]}$ the diffusion bridge process obtained by conditioning $(X_t)_{t\in[0,T]}$ with infinitesimal generator $\frac{1}{2} L = \frac{1}{2} \Delta + Z$ and starting point $x_0\in M$ on $X_T=x_T$, we define the time-reversed process $(\bar{Y}_{t})_{t\in[0,T]}$ by $\bar{Y}_{t} = Y_{T-t}$ for $t\in[0,T]$. Then the infinitesimal generator of $(\bar{Y}_{t})_{t\in[0,T]}$ is given by
    \begin{equation*}
        \frac{1}{2}\bar{L}_t
        =\frac{1}{2}\Delta-Z+S_{T-t}(x_0,\cdot).
    \end{equation*}
\end{corollary}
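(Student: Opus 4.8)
The plan is to obtain Corollary~\ref{cor:timereversedbridge} by composing the two lemmas already established. Lemma~\ref{lemma:Bridge} identifies the bridge process $(Y_t)_{t\in[0,T]}$ as a diffusion with time-dependent generator $\frac{1}{2}L_t = \frac{1}{2}\Delta + Z_t$, where, upon writing $\frac{1}{2}L = \frac{1}{2}\Delta + Z$, the time-dependent vector field is
$$Z_t = Z + \nabla^E \log p_{T-t}(\cdot, x_T).$$
This places $(Y_t)$ exactly in the form assumed in Lemma~\ref{lemma:Denoise}, so I would apply that lemma directly to read off the generator of the time-reversal $\bar{Y}_t = Y_{T-t}$, namely $\frac{1}{2}\bar{L}_t = \frac{1}{2}\Delta - Z_{T-t} + \nabla^E \log q_{T-t}$, where $q_t$ is the density of $Y_t$ defined through \eqref{eq:bridgedensity}.

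The two remaining ingredients are the substituted drift $Z_{T-t}$ and the bridge density $q_t$. For the drift, replacing $t$ by $T-t$ in the expression for $Z_t$ gives
$$Z_{T-t} = Z + \nabla^E \log p_{t}(\cdot, x_T).$$
For the density, I would extract $q_t$ from the $h$-transform description in the proof of Lemma~\ref{lemma:Bridge}: since $p_{0,t}^h(x_0,y) = p_{0,t}(x_0,y)\, h_t(y)/h_0(x_0)$ with $h_t(x) = p_{T-t}(x,x_T)$, the density of $Y_t$ is
$$q_t(y) = p_t(x_0,y) \frac{p_{T-t}(y,x_T)}{p_T(x_0,x_T)}.$$

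With these in hand, the remaining computation is a short cancellation. Substituting $t \mapsto T-t$ and taking logarithms, the $y$-dependent part of $\log q_{T-t}(y)$ is $\log p_{T-t}(x_0, y) + \log p_t(y, x_T)$, the normalising constant $p_T(x_0,x_T)$ dropping out under $\nabla^E$, so
$$\nabla^E \log q_{T-t} = \nabla^E \log p_{T-t}(x_0, \cdot) + \nabla^E \log p_t(\cdot, x_T).$$
Inserting this together with $Z_{T-t}$ into the generator from Lemma~\ref{lemma:Denoise}, the two copies of $\nabla^E \log p_t(\cdot, x_T)$ cancel, leaving
$$\frac{1}{2}\bar{L}_t = \frac{1}{2}\Delta - Z + \nabla^E \log p_{T-t}(x_0, \cdot).$$
Recognising the surviving gradient as the score $S_{T-t}(x_0,\cdot) = \nabla^{y,E}\log p_{T-t}(x_0,y)$ of \eqref{eq:score} finishes the argument.

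There is no genuine analytic obstacle here, as the hard work has been done in the two lemmas; the only point demanding care is the bookkeeping of the two distinct density factors in $q_t$---the forward factor $p_t(x_0,y)$ and the backward factor $p_{T-t}(y,x_T)$---and keeping precise track of which argument the horizontal gradient acts on. Getting this right is exactly what makes the backward factor cancel against $Z_{T-t}$, so that only the forward factor, i.e. the score, survives; reversing the roles of the two factors would spoil the cancellation.
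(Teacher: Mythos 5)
Your proposal is correct and follows essentially the same route as the paper: compose Lemma~\ref{lemma:Bridge} with Lemma~\ref{lemma:Denoise}, identify the bridge density $q_{T-t}(y)=p_{T-t}(x_0,y)\,p_{t}(y,x_T)/p_{T}(x_0,x_T)$, and cancel the backward factor $\nabla^{x,E}\log p_t(\cdot,x_T)$ against the reversed drift so that only the score $S_{T-t}(x_0,\cdot)$ survives. The bookkeeping of which argument the horizontal gradient acts on is exactly the point the paper's proof also emphasises, and you have it right.
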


\begin{proof}
    By Lemma~\ref{lemma:Bridge}, the infinitesimal generator of $(Y_t)_{t\in[0,T]}$ takes the form
    \begin{displaymath}
        \frac{1}{2} \Delta +Z + \nabla^{x,E} \log p_{T-t}(\cdot , x_T).
    \end{displaymath}
    Lemma~\ref{lemma:Denoise} then implies that the time-reversed process $(\bar{Y}_{t})_{t\in[0,T]}$ has infinitesimal generator
    \begin{equation}\label{eq:timerevbridgegen}
        \frac{1}{2}\bar{L}_t
        =\frac{1}{2}\Delta-Z-\nabla^{x,E}\log p_{t}(\cdot,x_T)
        +\nabla^E\log q_{T-t},
    \end{equation}
    where $q_t$ is still defined by~\eqref{eq:bridgedensity}. It remains to show that the linear term reduces to the claimed expression. Since $(Y_t)_{t\in[0,T]}$ is the diffusion bridge process associated with $(X_t)_{t\geq 0}$ from $x_0$ to $x_T$ in time $T>0$, we have, for $y\in M$,
    \begin{displaymath}
        q_{T-t}(y)
        =\frac{p_{0,T-t}(x_0,y)p_{T-t,T}(y,x_T)}{p_{0,T}(x_0,x_T)}
        =\frac{p_{T-t}(x_0,y)p_{t}(y,x_T)}{p_{T}(x_0,x_T)}.
    \end{displaymath}
    It follows that
    \begin{displaymath}
        \nabla^E\log q_{T-t}
        =\nabla^{y,E} \log p_{T-t}(x_0,\cdot)+\nabla^{x,E} \log p_{t}(\cdot,x_T).
    \end{displaymath}
    This combined with~\eqref{eq:timerevbridgegen} and the definition \eqref{eq:score} establishes
    \begin{displaymath}
        \frac{1}{2}\bar{L}_t
        =\frac{1}{2}\Delta-Z+\nabla^{y,E} \log p_{T-t}(x_0,\cdot)
        = \frac{1}{2} \Delta - Z + S_{T-t}(x_0,\cdot),
    \end{displaymath}
    as required.
\end{proof}

\begin{remark}[Generative models]
We use the reversed generator from Lemma~\ref{lemma:Denoise} for simulating bridge processes. However, we could have also applied it for generative modelling as is often the focus in the machine learning literature on diffusion models, i.e., given a stochastic process $(Y_t)_{t\in[0,T]}$ with some final distribution $Y_T$, reversing the process enables sampling from the initial distribution for $Y_0$. Some examples of generative modelling on manifolds include \cite{bortoliRiemannianScoreBasedGenerative2022,huangRiemannianDiffusionModels2022}.
\end{remark}

\section{Approximating the score with neural networks}
\label{sec:score}
We now build the setup for approximating the score using neural networks in the sub-Riemannian setting. 
The ideas follow the ones presented in~\cite{HdBDT21}, where diffusion processes on $\R^d$ with uniformly positive definite diffusivity matrices are considered. The complexities arising in our context correctly handle the sub-Riemannian geometry, particularly the hypoellipticity of the sub-Riemannian diffusion processes.

In this section, we discuss loss functions involving the divergence of the score and a basic Euler--Maruyama sampling scheme leading to the score learning algorithm. We also give some details on the sampling scheme, for further use in Section~\ref{sec:denoising}, where we discuss the alternative denoising loss.

As previously, let $(E,g)$ be a bracket-generating sub-Riemannian structure on a manifold $M$ and let $\Delta$ be the sub-Laplacian with respect to the volume form $\db\mu$. We consider the stochastic process $(X_t)_{t\geq 0}=(X_t^{x_0})_{t\geq 0}$ with infinitesimal generator $\frac{1}{2} L = \frac{1}{2} \Delta + Z$ started from the initial value $x_0\in M$ and assumed to not explode. Let $p_t(x_0, \cdot)$ for $t>0$ denote the density of $(X_t)_{t\geq 0}$ with respect to $\db\mu$ whose score $S_t(x_0,\cdot)$ is given by \eqref{eq:score}.

Recall that according to Lemma~\ref{lemma:Bridge}, the bridge process associated with $(X_t)_{t\geq 0}$ from $x_0$ to $x_T\in M$ in time $T>0$ has generator $\frac{1}{2} L + \nabla^{x,E} \log p_{T-t}(\cdot , x_T)$. Since, if $Z = 0$, the operator $L^* = L = \Delta$ is symmetric, it follows from Remark~\ref{remark:adjointprocess} that then the generator of the bridge process can be expressed as $\frac{1}{2}L + S_{T-t}(x_T, \cdot)$. This would allow us to simulate bridge processes by approximating the score $S_{T-t}(x_T, \cdot)$ from the target point $x_T$.
However, in general, we have $p_t^*(y,x) = p_t(x,y)$ with $p_t^*(y,\cdot)$ a fundamental solution of $\partial_t - \frac{1}{2}L^*$ where the adjoint operator
\begin{displaymath}
    L^* = \Delta - 2Z - 2\dv_{\db\mu} Z
\end{displaymath}
with respect to $\db\mu$ may have a non-zero zeroth-order term. To avoid this problem, we instead consider the reversed bridge process $(\bar{Y}_t)_{t\in[0,T]}$ as in Corollary~\ref{cor:timereversedbridge} which has the generator
$$\frac{1}{2} \bar{L}_{t} = \frac{1}{2} \Delta-Z + S_{T-t}(x_0, \cdot ).$$
Thus, we will first need to determine the score from the starting point $x_0$ and then simulate bridges backwards from the endpoint $x_T$.

We henceforth write $S_{t}(\cdot) = S_t(x_0, \cdot)$.

\subsection{Score representation and loss function}
For estimating the score $S$, we let $S^\theta$ be a time-dependent vector field in $E$ given as the output of a neural network whose network parameters are denoted by $\theta$. If $(\sigma_1, \dots, \sigma_k)$ is frame for $E$, we can consider this problem as searching for functions, for $j\in\{1,\dots,k\}$,
\begin{align*}
    S^{\theta,j}\colon[0,T]\times M&\to\mathbb{R}\\
    (t,y) &\mapsto S_t^{\theta,j}(y)
\end{align*}
such that $S^{\theta} = \sum_{j=1}^k S^{\theta,j} \sigma_j$. Further details of the network architecture are given in Section~\ref{sec:experiments}.

When modifying the neural network weights to make the network match the score, we work with the squared $L^2$-distance as the loss function
\begin{align*}
    \scrE(\theta)  & =\int_0^T \int_{M} \left\| S^{\theta}_t(y)- \nabla^{y,E} \log p_{t}(x_0,y)\right\|_g^2\, p_t(x_0,y) \dd \mu(y) \dd t \\
    & = \int_0^T \mathbb{E}^{x_0}\left[\left\| S^{\theta}_t(X_t)- \nabla^{y,E} \log p_{t}(x_0,X_t)\right\|_g^2 \right] \dd t.
\end{align*}
For short-time approximations and for being able to compute the loss at every step, we will subdivide the energy $\scrE(\theta)$ into shorter time-intervals.
Observe that, for $s\in[0,t)$,
\begin{align*}
& \int_M \langle S_t^{\theta}(y) , \nabla^{y,E} \log p_{t}(x_0,y) \rangle_g \, p_t(x_0, y)\dd\mu(y) =  \int_M \langle S_t^{\theta}(y) , \nabla^{y,E} p_{t}(x_0,y) \rangle_g \dd\mu(y) \\
& = \int_{M \times M} \langle S_t^{\theta}(y) , \nabla^{y,E} \log p_{t-s}(z,y) \rangle_g \, p_{s}(x_0,z) p_{t-s}(z,y) (\db\mu \otimes \db\mu)(z,y)  \\
& =\int_{M \times M} \langle S_t^{\theta}(y) , \nabla^{y,E} \log p_{t-s}(z,y) \rangle_g \dd \mathbb{P}_{s,t}(z,y)  = \E^{x_0}\left[\langle S^\theta_t(X_t), S_{t-s}(X_s, X_t) \rangle_g \right].
\end{align*}
From these computations, we deduce that for $e_{s,t}(\theta)$ defined by
\begin{align} \label{est} 
    e_{s,t}(\theta) &  = \E^{x_0}\left[\langle S^\theta_t(X_t), S^\theta_t(X_t) - 2 S_{t-s}(X_s, X_t) \rangle_g \right],
\end{align}
the expression
$$\mathbb{E}^{x_0}\left[\left\| S^{\theta}_t(X_t)- \nabla^{y,E} \log p_{t}(x_0,X_t)\right\|_g^2 \right] -e_{s,t}(\theta)$$
is constant with respect to $\theta$. This implies in particular that, for any subdivision $0=t_0 < t_{1} < \cdots < t_n= T$ of the interval $[0,T]$, there exists a constant $C$ such that
$$\scrE(\theta) = \scrE_{0,T}(\theta) + C = \sum_{i=0}^{n-1} \scrE_{t_{i},t_{i+1}}(\theta) + C,
\quad\text{where}\quad \scrE_{s,t}(\theta) := \int_s^t e_{s,r}(\theta) \dd r.$$
It follows that letting our neural network minimise relative to $\scrE(\theta)$ is equivalent to minimising with respect to $\scrE_{0,T}(\theta)$. Note that the definition for $\scrE_{s,t}(\theta)$ still contains the expression of the true score, which is usually intractable. In the subsequent sections, we discuss how we can consider minima of the loss functions without knowing the true score explicitly.

\subsection{Loss function with divergence}
By exploiting an integration-by-parts trick, the loss function can be computed without explicitly knowing the true score. We prove the result here for the sub-Riemannian case that has been previously derived for the Euclidean setting in~\cite{hyvarinenEstimationNonNormalizedStatistical2005} and for Riemannian manifolds in~\cite{bortoliRiemannianScoreBasedGenerative2022}, based on earlier work in \cite{song2019generative,song2020improved}.

\begin{theorem}\label{thm:IBP}
For any $s,t\in[0,T]$ with $s<t$ and for $e_{s,t}(\theta)$ defined as in \eqref{est}, we have
\begin{equation}
\begin{split}
    e_{s,t}(\theta) & = \int_{M} \left( \left\|S^\theta_{t}(y)\right\|^2_g + 2(\dv_{\db\mu} S_{t}^\theta)(y) \right) \db\pp_{t}(y) \\
    & = \E^{x_0}\left[\left\|S^\theta_{t}(X_t)\right\|^2_g + 2(\dv_{\db\mu} S_{t}^\theta)(X_t)\right]  =: e_t^{(2)}(\theta).
\end{split}
\label{eq:div_loss}
\end{equation}
\end{theorem}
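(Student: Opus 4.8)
The plan is to start from the definition of $e_{s,t}(\theta)$ in \eqref{est}, expand the inner product by bilinearity, and treat the two resulting expectations separately. The term $\E^{x_0}\left[\langle S^\theta_t(X_t), S^\theta_t(X_t)\rangle_g\right] = \E^{x_0}\left[\|S^\theta_t(X_t)\|_g^2\right]$ is already in the desired form, namely $\int_M \|S^\theta_t(y)\|_g^2\,\db\pp_t(y)$, so no work is required there. The entire content of the theorem lies in rewriting the cross term $-2\,\E^{x_0}\left[\langle S^\theta_t(X_t), S_{t-s}(X_s,X_t)\rangle_g\right]$. For this I would invoke the chain of equalities established immediately before the theorem statement, which for $s\in[0,t)$ identifies the expectation $\E^{x_0}\left[\langle S^\theta_t(X_t), S_{t-s}(X_s,X_t)\rangle_g\right]$ with $\int_M \langle S^\theta_t(y), \nabla^{y,E} p_t(x_0,y)\rangle_g\,\dd\mu(y)$, where the density has already cancelled the logarithm in the score. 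In particular this exhibits the $s$-independence of $e_{s,t}(\theta)$, which is what licenses the notation $e_t^{(2)}(\theta)$ for the result.

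The heart of the argument is then a single integration-by-parts identity. Using the defining property of the sharp map, $\alpha(v) = \langle \sharp\alpha, v\rangle_g$ for $v\in E$, together with $\nabla^{y,E} p_t = \sharp\,\db p_t$, I would rewrite the integrand pointwise as $\langle S^\theta_t, \nabla^{y,E} p_t\rangle_g = \db p_t(S^\theta_t) = S^\theta_t(p_t)$, that is, the horizontal vector field $S^\theta_t$ differentiating the transition density. Applying the divergence theorem in the form $\int_M S^\theta_t(p_t)\,\dd\mu = -\int_M p_t\,(\dv_{\db\mu}S^\theta_t)\,\dd\mu$, which expresses that $-\dv_{\db\mu}$ is the formal $L^2(\db\mu)$-adjoint of differentiation along $S^\theta_t$, converts the cross term into $+2\int_M (\dv_{\db\mu}S^\theta_t)(y)\,p_t(x_0,y)\,\dd\mu(y)$. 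Adding back the norm term yields $\int_M \left(\|S^\theta_t(y)\|_g^2 + 2(\dv_{\db\mu}S^\theta_t)(y)\right)p_t(x_0,y)\,\dd\mu(y)$, which is exactly \eqref{eq:div_loss} once this is read as an expectation under $\db\pp_t$.

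I expect the main obstacle to be justifying that the boundary contribution in the integration by parts genuinely vanishes, since $M$ is in general noncompact and $S^\theta_t$ need not be compactly supported. This requires integrability and decay control: one needs $p_t$ and $\nabla^{y,E} p_t$ to decay and $S^\theta_t$ to grow at most moderately so that the flux of $p_t\,S^\theta_t$ at infinity is zero. The $L^2$-membership of the horizontal gradient of the density recorded in Lemma~\ref{lemma:Denoise}, combined with Cauchy--Schwarz and the hypoelliptic smoothness and positivity of $p_t$ from \cite{Hor67,StVa72}, is precisely the regularity one would lean on to close this gap. A secondary routine point is to justify differentiating the density under the integral sign when passing from $\nabla^{y,E}\log p_t$ to $\nabla^{y,E} p_t$, which again follows from the smoothness and integrability of the transition density.
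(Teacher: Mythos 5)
Your proposal is correct and follows essentially the same route as the paper: both reduce the cross term to an integral of $\langle S^\theta_t,\nabla^{y,E}p\rangle_g = \db p(S^\theta_t)$ via the sharp map and then apply the divergence identity $\int_M \db f(V)\dd\mu = -\int_M f\,\dv_{\db\mu}(V)\dd\mu$, the only cosmetic difference being that the paper integrates by parts in $y$ against $p_{t-s}(z,y)$ inside the double integral over $M\times M$ before collapsing to $\db\pp_t$, whereas you first pass to the marginal $p_t(x_0,\cdot)$. Your remark about justifying the vanishing of boundary contributions on a noncompact $M$ is a fair point that the paper itself leaves implicit.
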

We remark that this rewritten expression for $e_t^{(2)}(\theta)$ does not depend explicitly on the score itself. Since then $\scrE_{s,t}(\theta) = \int_s^t e_r^{(2)}(\theta) \dd r$, we can use the formulation of Theorem~\ref{thm:IBP} to minimise $\scrE_{0,T}(\theta)$ without knowing the score.
In computations, the expectations are approximated by averaging over samples $X^{(1)},\dots,X^{(K)}$ from the stochastic process $X=(X_t)_{t\in [0,T]}$ through, with $\delta = \frac{T}{n}$,
\begin{equation}\label{eq:loss_samples}
\scrE_{0,T}(\theta)
\approx
\frac{\delta}{K}\sum_{l=1}^K \sum_{i=1}^n \left(\left\|S^\theta_{i\delta}(X_{i\delta}^{(l)})\right\|^2_g + 2(\dv_{\db\mu} S_{i\delta}^\theta)(X_{i\delta}^{(l)})\right).
\end{equation}

\begin{proof}[Proof of Theorem~\ref{thm:IBP}]
Recall that on the manifold $M$ without boundary we have, for $f\in C^\infty(M)$ and a smooth vector field $V$ on $M$,
\begin{equation}\label{eq:divthm}
    \int_M \db f(V) \dd\mu = -\int_M f \dv_{\db\mu}(V) \dd\mu.
\end{equation}
We apply~\eqref{eq:divthm} to deduce
\begin{align*}
& \int_{M \times M} \langle S_t^\theta(y), \nabla^{y,E} \log p_{t-s}(z,y) \rangle_g \dd\pp_{s,t}(z,y) \\
& = \int_{M \times M} p_s(x_0, z) (\db^y p_{t-s}(z,y)(S_t^\theta(y))) \, (\db\mu \otimes \db\mu)(z,y) \\
& = - \int_{M \times M} p_s(x_0,z) p_{t-s}(z,y)(\dv_{\db\mu} S_t^\theta)(y) \, (\db\mu \otimes \db\mu)(z,y)  \\
& = - \int_{M} (\dv_{\db\mu} S_t^\theta)(y) \dd\pp_t(y).
\end{align*}
The claimed result that $e_{s,t}(\theta) = e^{(2)}_t(\theta)$ then follows.
\end{proof}

\subsubsection{Expression in local coordinates} \label{sec:localSDE}
Suppose we choose a local coordinate system $(x^1, \dots, x^d)$ on $M$ and let $\db x = \db x^1 \dots \db x^d$ denote the Euclidean volume measure in these coordinates. Consider a second order operator $L = \Delta + 2Z$ on $M$ where $\Delta$ is a sub-Laplacian of a sub-Riemannian structure $(E,g)$. By Remark~\ref{re:ChoiceMeasure}, we know that with the appropriate choice for $Z$ we can  assume that $\Delta = \Delta_{E,g,\db x}$ is the sub-Laplacian with respect to $\db x$. Let $(\sigma_1, \dots, \sigma_k)$ be a local orthonormal frame for $(E,g)$, which is described by a $d \times k$ matrix $(\sigma_j^i)$ such that $\sigma_j = \sum_{i=1}^d \sigma_j^i \partial_{x^i}$ for $j\in\{1,\dots,k\}$. With $Z = \sum_{i=1}^d Z^i \partial_{x^i}$, we have
\begin{align*} L = \Delta +2Z &= \sum_{j=1}^k \sum_{i,l=1}^d \sigma_j^l \sigma^i_j \partial_{x^l}  \partial_{x^i} + \sum_{j=1}^k \left( \sum_{l=1}^d \partial_{x^l} \sigma_j^l \right) \sum_{i=1}^d \sigma_j^i \partial_{x^i} \\
& \qquad + \sum_{j=1}^k \sum_{i,l=1}^d \sigma^l_j   \partial_{x^l} \sigma_j^i \partial_{x^i}  + 2\sum_{i=1}^d Z^i \partial_{x^i}. \end{align*}

Let $(W_t)_{t\geq 0}$ be a standard Brownian motion on $\R^k$ and set
$$\sigma_0 =  \sum_{j=1}^k (\dv_{\db x} \sigma_j) \sigma_j + 2Z = \sum_{j=1}^k \left( \sum_{l=1}^d \partial_{x^l} \sigma_j^l \right) \sum_{i=1}^d \sigma_j^i \partial_{x^i}  + 2\sum_{i=1}^d Z^i \partial_{x^i}$$
as well as $\tau_0 = \sigma_0 + \frac{1}{2} \sum_{j=1}^k (\bar{\nabla}_{\sigma_j} \sigma_j)$, where
$\bar{\nabla}$ denotes the flat connection given by
\begin{equation} \label{FlatConnection}
\bar{\nabla}_{V_1} V_2 = \sum_{i=1}^d (V_1 V_2^i) \partial_{x^i} = \sum_{i,l=1}^d V_1^{l} (\partial_{x^{l}} V_2^i) \partial_{x^i} .
\end{equation}
Then the stochastic process $(X_t)_{t\geq 0}$ with generator $\frac{1}{2} L$ is the unique strong solution to the Stratonovich stochastic differential equation
\begin{equation} \label{Stratonovich} \db X_t = \sum_{j=1}^k \sigma_j(X_t) \circ \db W_t^j + \sigma_0(X_t) \dd t,\qquad X_0=x_0, \end{equation}
or to the Itô stochastic differential equation
\begin{equation} \label{Ito} \db X_t = \sum_{j=1}^k \sigma_j(X_t) \dd W_t^j + \tau_0(X_t) \dd t ,\qquad X_0=x_0.\end{equation}
Writing the output of the neural network as $S^\theta = \sum_{j=1}^k S^{\theta,j}\sigma_j$, we obtain
\begin{align*}
    e_t^{(2)}\left(\theta\right) &=  \int_M \left( \sum_{j=1}^k S_t^{\theta,j} (y)^2 \right) p_t(x_0,y) \dd y \\
&\quad + 2 \int_M \left(  \sum_{j=1}^k 
\sum_{i=1}^d\left(S_t^{\theta,j}(y) (\partial_{x^i} \sigma^i_j)(y) + \sigma_j^i(y) (\partial_{x^i} S_t^{\theta,j})(y) \right)\right) p_t(x_0,y) \dd y.
\end{align*}
If we now have sample paths $X^{(1)}$, $\dots$, $X^{(K)}$ and discretise the interval $[0,T]$ with $\delta = \frac{T}{n}$, then \eqref{eq:loss_samples} gives rise to
\begin{align*}
    \scrE_{0,T}(\theta) & \approx \frac{\delta}{K} \sum_{l=1}^K \sum_{i=1}^n  \sum_{j=1}^k S^{\theta, j}_{i \delta}(X_{i\delta}^{(l)})^2 \\
& \quad + \frac{\delta}{K} \sum_{l=1}^K \sum_{i=1}^n  \sum_{j=1}^k  
\sum_{m=1}^d\left( S_{i\delta}^{\theta,j}(X_{i\delta}^{(l)}) (\partial_{x^m} \sigma^m_j)(X_{i \delta}^{(l)}) + \sigma_j^m(X_{i\delta}^{(l)}) (\partial_{x^m} S_{i\delta}^{\theta,j})(X_{i\delta}^{(l)}) \right).
\end{align*}

\subsection{Time integration}
\label{sec:integration}
We now explicitly describe an Euler--Maruyama simulation scheme for finding paths of the stochastic process $(X_t)_{t\in[0,T]}=(X_t^{x_0})_{t\in[0,T]}$ with generator $\frac{1}{2} L = \frac{1}{2} \Delta+Z$. Later on, we will further exploit this approximation for the denoising version of the loss function. 

In order to describe the method, we assume that $M = \mathbb{R}^d$ topologically.
If the manifold $M$ has a different topology, we can either work in local coordinates or use geometric Euler--Maruyama methods. For instance, see~\cite{piggott2016geometric,muniz2022higher} for some examples of methods for matrix Lie groups.

Assume that $(X_t)_{t\in[0,T]}$ can, at least locally, be considered as the solution of \eqref{Ito} in a coordinate system $(x^1,\dots,x^d)$. For a fixed $n\in\N$, we set $\delta = \frac{T}{n}$ and $t_i = i \delta$ for $i\in\{0,1,2,\dots, n\}$. The Euler--Maruyama approximation $(\hat X_t)_{t\in[0,T]} = (\hat X_t^{n})_{t\in[0,T]}$ is then defined by, for $i\in\{0,1,2,\dots, n-1\}$ and $t\in(0,\delta]$,
\begin{equation}
    \hat X_{t_i+t} = \hat X_{t_i} + \sum_{j=1}^k \sigma_j(\hat X_{t_{i}})  (W_{t_i+t}^j-W_{t_i}^j) + \soo(\hat X_{t_{i}}) t, \qquad \hat X_0 = x_0.
    \label{eq:EulerMaruyama}
\end{equation}
The Euler--Maruyama approximation $(\hat X_t^n)_{t\in[0,T]}$ converges strongly to $(X_t)_{t\in[0,T]}$ as $n\to\infty$ under appropriate growth conditions on the vector fields $\soo,\sigma_1,\dots, \sigma_k$, see \cite[Theorem~4.5.3 and Theorem~9.6.2]{kloeden1992approximation} for details.

\subsection{Score estimation}
Together the loss $e_{s,t}(\theta)$ given by~\eqref{eq:div_loss} and the sampling scheme \eqref{eq:EulerMaruyama} provide the tools needed to train the score approximation. The explicit algorithm is listed in Algorithm~\ref{algo:Divergence} below.
\begin{algorithm}[H] 
\caption{Estimating the score using the divergence loss}
\label{algo:Divergence}
\begin{algorithmic}[1]
\Require{
\Statex
\begin{enumerate}[$\bullet$]
        \item \textit{Initial set-up:} Initial point $x_0$, final point $x_T$.
      \item \textit{Diffusion vector fields:} Vector fields $\sigma_1, \dots, \sigma_k$ through the function with values in $d\times k$ matrix $(\sigma_j^i)$.
      \item \textit{Learning setup:} Learning rate $\epsilon>0$, number of iterations $N\in\mathbb{N}$.
  \end{enumerate}
} 
\State \textit{Initialisation}: Initialise weights $\theta$ randomly.
\For{$i = 1$ to $N$} 
    \State \textit{Forward pass}: Draw batch of $K\in\mathbb N$ Euclidean Wiener process samples $W^{(1)},\dots,W^{(K)}$ and integrate~\eqref{eq:EulerMaruyama} to obtain sample paths $X^{(1)},\dots,X^{(K)}$.
    \State \textit{Compute Loss}: Evaluate the approximation~\eqref{eq:loss_samples} of $\scrE_{0,T}(\theta)$ using samples $X^{(1)},\dots,X^{(K)}$.
    \State \textit{Backward pass}: Compute the gradient $\nabla_\theta\scrE_{0,T}(\theta)$.
    \State \textit{Update Parameters}: Set $\theta\leftarrow \theta-\epsilon\nabla_\theta\mathcal \scrE_{0,T}(\theta)$.
\EndFor
\end{algorithmic}
\end{algorithm}

\section{Approximating loss functions for short time steps}
\label{sec:denoising}
Even though the loss function in $\scrE_{s,t}(\theta) = \int_s^t e_r^{(2)}(\theta) \dd r$ defined by~\eqref{eq:div_loss} has no explicit dependency on the score, using the term $(\dv_{\db\mu} S_t^\theta)p_t$ can in practice be problematic because the integral often is approximated with finite samples at the steps of the training, and the training might therefore at each step minimise the divergence at the sample points only, resulting in unstable convergence. Moreover, the computational expense of finding derivatives of the neural network and taking gradients of those for the optimisation can be a problem if the dimension of $M$ is high. These issues have also been observed in the Euclidean case, and a common way to deal with these problems is the denoising loss, see \cite{hyvarinenEstimationNonNormalizedStatistical2005,vincentConnectionScoreMatching2011}. In this section, we show how this approach can be generalised to the sub-Riemannian setting.

With the notation in this paper, the denoising loss arises from \eqref{est} using explicit approximations of the true score for $s$ and $t$ close, e.g. for steps from $t_{i}$ to $t_{i+1}$. Such steps can be approximated, in the Euclidean setting, by normal distributions and, in the Riemannian situation, by heat kernel approximations or tangent space normal distributions as in \cite{bortoliRiemannianScoreBasedGenerative2022}. In the sub-Riemannian context, we take horizontal derivatives of the score, but we also need to take account of the change of the distribution between tangent spaces and the hypoellipticity of the diffusion process. We therefore look at short-time approximations for the steps of diffusion processes on sub-Riemannian manifolds.

\subsection{Short-time score approximations with Euler steps}  \label{sec:EMapprox}
We choose a local coordinate system $(x^1, \dots, x^d)$ such that we can identify $M$ with $\R^d$. As in Section~\ref{sec:integration}, let us consider the approximation $(\hat X_t)_{t\in[0,T]}$ given as the solution to \eqref{eq:EulerMaruyama} from the Euler--Maruyama integration. The equation is stated using the global orthonormal frame $(\sigma_1,\dots,\sigma_k)$ represented by the matrix $\sigma = (\sigma^i_j)$. We write $\Sigma = \sigma \sigma^\top$ and observe, for $t\in(0,\delta)$, that $\hat X_t \sim N(t \soo(x_0), t \Sigma(x_0))$ which is supported on $t \soo(x_0) + E_{x_0}$. It follows that the probability measure $\hat \pp_t =\hat \pp_t^n$ of $\hat X_t$ is not even absolutely continuous with respect to $\pp_t$ whenever $t\in (0,\delta)$ and $E\not= TM$. If we write $\db \hat \pp_t = \hat p_t \dd\mu$ then $\hat p_t$ cannot be described as a continuous function.

We further encounter the problem that in general the tangent space at the point $y \in t\soo(x_0) + E_{x_0}$ is not included in the horizontal distribution at~$y$, i.e., we generally have $T_y(t  \soo(x_0) + E_{x_0}) \not \subseteq E_y$. In particular, it becomes problematic to define a horizontal gradient. We instead consider the following solution. We take $S^\theta = \sum_{j=1}^k S^{\theta,j} \sigma_j$ with associated vector representation $\bfS^\theta = (S^{\theta,1}, \dots, S^{\theta,k})$ and observe that
\begin{align*}
e_{t_{i},t_i+t}(\theta) & = \E^{x_0}\left[\langle S^\theta_{t_i+t}(X_{t_i+t}), S^\theta_{t_i+t}(X_{t_i+t}) - 2 S_{t}(X_{t_i}, X_{t_i+t}) \rangle_g \right] \\
& =  \sum_{j=1}^k \int_{M\times M} \left( S^{\theta,j}_{t_i+t}(y)^2 - 2 S^{\theta,j}_{t_i+t}(y) \dd^y \log p_{t}(z,y)(\sigma_j(y))  \right) \db \mathbb{P}_{t_{i},t_i+t}(z,y).
\end{align*}
As a result of the discussed problems with the horizontal gradient in the approximation scheme due to only having access to derivatives of $\hat p_t(z,\cdot)$ in the $E_z$ directions, we evaluate the differential above at the initial point instead of the target point and use the approximation
\begin{equation*}
\hat e_{t_{i},t_i+t}(\theta) := \sum_{j=1}^k \int_{M\times M} \left( S^{\theta,j}_{t_i+t}(y)^2 - 2 S^{\theta,j}_{t_i+t}(y) \dd^y \log \hat p_{t}(z,y)(\sigma_j(z)) \right)
\db \hat {\mathbb{P}}_{t_{i},t_i +t}(z,y),
\end{equation*}
where $\hat \pp_{s,t}$ is the probability measure on $M \times M$ of $(\hat X_s, \hat X_t)$.
For $t\in(0,\delta)$, we have
$$\hat p_{t}(z,y) = \delta_{t\soo(z) + E_z}(y) \frac{1}{\sqrt{(2\pi t)^k \prod_i \lambda_i}} \exp\left(- \frac{\langle y-z- t\soo(z), y-z- t\soo(z) \rangle_{g(z)}}{2t}\right),$$
where $\prod_i \lambda_i$ is the product of all non-zero eigenvalues of $\Sigma(z)$. These functions have well-defined derivatives in the directions of $E_z$ which yields
$$\db^y \log \hat{p}_{t}(z,y)(\sigma_j(z)) = -\frac{1}{t} \langle \sigma_j(z) , y-z - t\soo(z) \rangle_{g(z)}.$$
Recalling now that $\hat X_{t_i+t} - \hat X_{t_i} = \sigma(\hat X_{t_{i}})  (W_{t_i+t}-W_{t_i}) + \soo(\hat X_{t_{i}}) t$, we obtain
\begin{align*}
\hat e_{t_{i},t_i+t}(\theta) 
& = \sum_{j=1}^k \E^{x_0} \left[ S^{\theta,j}_{t_i+t}(\hat X_{t_i+t})^2 + \frac{2}{t} \langle (S^{\theta,j}_{t_i+t} \sigma_j)(\hat X_{t_i}) , \hat X_{t_i+t} - \hat X_{t_i} - t\soo(\hat X_{t_i}) \rangle_{g(X_{t_i})}\right] \\
& =  \E^{x_0} \left[ \left\langle \bfS^{\theta}_{t_i+t}(\hat X_{t_i+t}), \bfS^{\theta}_{t_i+t}(\hat X_{t_i+t}) + \frac{2}{t} (W_{t_i+t}-W_{t_i}) \right\rangle_{\mathbb{R}^k}\right].
\end{align*}

In summary, if we approximate the diffusion process $(X_t)_{t\in[0,T]}$ with $(\hat X_t)_{t\in[0,T]}$ which relies on the Euler--Maruyama scheme, then we can use as a loss function $\hat \scrE_{0,T}(\theta) = \sum_{i=0}^{n-1} \int_{t_{i}}^{t_{i+1}} \hat e_{t_{i},s}(\theta) \dd s$ that tries to predict each step. However, since each Euler step merely has positive probability on a proper subspace, we can only predict the gradient of $\log \hat p_t( \hat X_{t_i}, \hat X_{t_i+t})$ at the initial point $\hat X_{t_i}$. We again use a time-discretisation with $n$ intervals and write $\delta = \frac{T}{n}$. We generate $K \times n$ random vectors $\Delta_i W^{(l)}$ for $i\in\{1, \dots, n\}$ as well as $l\in\{1, \dots, K\}$ that are drawn from a $N(0,\delta I)$-distribution in~$\mathbb{R}^k$. We define first $X^{(l)}_0 = x_0$ and then iteratively
$$\hat X_{i+1}^{(l)}= \hat X_{i}^{(l)}+\sum_{j=1}^k \sigma_j(\hat X_i^{(l)}) \Delta_{i+1} W^{(l),j}+ \delta \tau_0(X_i^{(l)})$$
and compute the loss function by
\begin{align} \label{eq:denoising_loss}
\hat \scrE_{0,T}(\theta) &\approx \frac{1}{K} \sum_{l=1}^K \sum_{i=1}^n  \left\langle \bfS^{\theta}_{i \delta}(\hat X_i^{(l)}), \delta \, \bfS^{\theta}_{i \delta}(\hat X_{i}^{(l)}) + 2 \Delta_i W^{(l)} \right\rangle_{\mathbb{R}^k} \\ \nonumber
& = \frac{\delta}{K} \sum_{l=1}^K \sum_{i=1}^n  \left\| \bfS^{\theta}_{i \delta}(\hat X_i^{(l)})+ \frac{1}{\delta} \Delta_i W^{(l)} \right\|_{\mathbb{R}^k}^2 +C.
\end{align}
We can use the loss $\hat \scrE_{0,T}(\theta)$ to train a neural network by applying Algorithm~\ref{algo:Divergence} with the loss $\scrE_{0,T}(\theta)$ replaced by $\hat\scrE_{0,T}(\theta)$.

\subsection{Taylor expansion and approximations of stochastic integrals}
\label{sec:TaylorApproximations}
We improve on the method in Section~\ref{sec:EMapprox} by considering a more complicated approximation scheme for each step. Contrary to what is the case for the Euler--Maruyama approximation, each step in our new approximation scheme gives us densities with respect to Lebesgue measure that are smooth and positive. As a tool in this new approximation, we use the stochastic Taylor expansion. We will describe the resulting approach in Section~\ref{sec:SimStep}.

We first look at the Stratonovich Taylor expansion for a stochastic process found in \cite[Chapter~5 and Chapter~10.7]{kloeden1992stochastic}.
Let $(X_t)_{t\geq 0}$ be the unique strong solution of the Stratonovich stochastic differential equation \eqref{Stratonovich} with vector fields $\sigma_0, \sigma_1, \dots, \sigma_k$. We use the convention $W^0_t =t$ and write $W_{s,t}^j = W_t^j - W_s^j$. For a multi-index $\alpha = (\alpha_1, \dots, \alpha_l) \in \{0,\dots, k\}^l$ for $l\in\N_0$, we set $l(\alpha) = l$ and we let $n(\alpha)$ denote the number of zeros in the multi-index $\alpha$. We further define the differential operators
$$\sigma_\alpha = \sigma_{\alpha_1} \sigma_{\alpha_{2}} \cdots \sigma_{\alpha_l}$$
as well as the stochastic processes given by
$$J^\alpha_{s,t} = \int_{s<t_1< \cdots < t_{l}<t} \circ \dd W_{t_1}^{\alpha_1} \circ \cdots \circ \db W_{t_l}^{\alpha_l}.$$
Let $(x^1, \dots, x^d)$ be a choice of local coordinate system for the manifold $M$ and define a function $\bfsigma_\alpha =(\sigma_{\alpha}^i)_i$ with values in $\R^d$ such that
$$\sigma_\alpha^i = \sigma_\alpha x^i,$$
i.e., the differential operator $\sigma_\alpha$ applied to the $i$-th coordinate function.
For further reference, we state below~\cite[Theorem~10.7.1 and Corollary~10.7.2]{kloeden1992stochastic}.

\begin{lemma}[Stratonovich Stochastic Taylor expansion]\label{lem:stratstochtaylor}
Let $\lfloor \cdot \rfloor$ denote the floor function, i.e., $\lfloor a \rfloor = \max_{n \in \mathbb{Z}, n \leq a } n$. Let $\calA_\gamma$ be the collection of all multi-indices $\alpha$ satisfying $l(\alpha) +n(\alpha) \leq 2 \gamma$, and set $\calA_\gamma^+ = \{ (\beta_1, \alpha) : \alpha \in \calA_\gamma, \beta_1 \in\{0,1,\dots, k\}\}$. We say that a vector-valued function $\mathbf{f}$ satisfies \eqref{Halpha} if almost surely
\begin{equation} \tag{$H_\alpha$} \label{Halpha}
\int_{0}^T \left\| \int_{0<t_1< \cdots <t_{l-1}<s} \mathbf{f}(X_{t_1}) \dd W_{t_1}^{\alpha_1} \cdots \dd W_{t_{l-1}}^{\alpha_{l-1}}\right\|^{2-\delta_{0,\alpha_l}} \dd s < \infty.
\end{equation}
Let integers $\gamma$ and $n$ be given such that $\gamma\geq 0$ and $n>T$. Set $\delta = \frac{T}{n}$ and $t_i = i \delta$ for $i\in\{0,1, \dots,n\}$, and define $\hat X^i$ iteratively by $\hat X^0 = 0$ as well as
$$\hat X^{i+1} = \sum_{l(\alpha) + n(\alpha)\leq 2\gamma} J_{t_{i},t_{i+1}}^\alpha \cdot \bfsigma_{\alpha}(\hat X^{i}),$$
which we extend to $(\hat X_t)_{t\in[0,T]}$ through
$$\hat X_{t_i+t} = \sum_{l(\alpha) + n(\alpha)\leq 2\gamma} J_{t_{i},t_{i}+t}^\alpha \cdot \bfsigma_{\alpha}(\hat X^{i}) , \qquad for\enspace t\in [0, \delta].$$
If we assume that we have constants $C_1, C_2$ such that both
\begin{itemize}
\item $\| \bfsigma_\alpha(x) - \bfsigma_\alpha(y)\|_{\R^d} \leq C_1 \|x-y\|_{\R^d}$ for any $\alpha \in \calA_\gamma$, and
\item $\bfsigma_\alpha$ is $C^{1,1}$, satisfies condition \eqref{Halpha} and furthermore the inequality
$$\| \bfsigma_{\alpha}(x) \|_{\R^d} \leq C_2(1+\|x\|_{\R^d})$$
for any $\alpha \in \calA_\gamma \cup \calA_\gamma^+$,
\end{itemize}
then we have the following bound, for some constant $C$,
$$\E^{x_0}\left(\sup_{0 \leq t \leq T} \| X_t - \hat X_t\|_{\R^d} \right) \leq C \delta^\gamma.$$
\end{lemma}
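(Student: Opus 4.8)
The plan is to follow the standard route for strong convergence of stochastic Taylor schemes, as in Kloeden--Platen, resting on three pillars: the Stratonovich--Taylor expansion with an explicit remainder, $L^2$-size estimates for the multiple Stratonovich integrals $J^\alpha_{s,t}$, and a discrete Gronwall argument upgraded to handle the supremum via a maximal inequality. Since the statement is essentially \cite[Theorem~10.7.1 and Corollary~10.7.2]{kloeden1992stochastic}, the role of a proof here is to recall why the hierarchy $\calA_\gamma$ together with the hypotheses \eqref{Halpha}, the Lipschitz bound, and the linear growth bound are exactly what is needed. First I would establish the expansion itself. Starting from the Stratonovich chain rule $f(X_t) = f(X_s) + \sum_{j=0}^k \int_s^t (\sigma_j f)(X_r)\circ \db W_r^j$ with the convention $W^0_r = r$, one iterates this identity, applying it to each integrand $\sigma_\alpha x^i$, and terminates the recursion as soon as the multi-index leaves $\calA_\gamma$. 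The outcome is a representation
$$X_t^i = \sum_{\alpha \in \calA_\gamma} \sigma_\alpha^i(X_s)\, J^\alpha_{s,t} + R_\gamma^i(s,t),$$
where the remainder $R_\gamma$ collects iterated integrals whose leading indices lie in $\calA_\gamma^+ \setminus \calA_\gamma$; condition \eqref{Halpha} is precisely what guarantees these iterated integrals are almost surely well defined.

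The second step is the moment bookkeeping. The key scaling estimate is that for each multi-index $\alpha$,
$$\left( \E |J^\alpha_{s,t}|^2 \right)^{1/2} \leq C\,(t-s)^{(l(\alpha) + n(\alpha))/2},$$
proved by induction on $l(\alpha)$ using the It\^o isometry for the genuinely stochastic factors and direct integration for the time factors, after converting Stratonovich integrals into It\^o integrals to exploit the martingale structure. Because the remainder is built from indices with $l(\alpha) + n(\alpha) > 2\gamma$, every remainder term contributes a factor $(t-s)^{>\gamma}$ in $L^2$, so the one-step local error over an interval of length $\delta$ is of size $O(\delta^{\gamma+1})$.

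Then I would run the global error recursion. Comparing $X_{t_{i+1}} - X_{t_i}$ with $\hat X^{i+1} - \hat X^i$ splits the one-step discrepancy into the remainder $R_\gamma$ (handled above) and the coefficient-freezing difference $\sum_\alpha (\bfsigma_\alpha(X_{t_i}) - \bfsigma_\alpha(\hat X^i)) J^\alpha_{t_i,t_{i+1}}$, which the global Lipschitz hypothesis $\| \bfsigma_\alpha(x) - \bfsigma_\alpha(y) \|_{\R^d} \leq C_1 \|x-y\|_{\R^d}$ bounds by the accumulated error $\|X_{t_i} - \hat X^i\|_{\R^d}$. Setting $Z_i = \E^{x_0}[\sup_{r \leq t_i} \|X_r - \hat X_r\|_{\R^d}^2]$, these estimates yield a recursion of the form $Z_{i+1} \leq (1 + C\delta) Z_i + C \delta^{2\gamma+1}$; discrete Gronwall then gives $Z_n \leq C \delta^{2\gamma}$, and Jensen's inequality converts this into the claimed first-moment bound of order $\gamma$. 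The linear growth hypothesis $\|\bfsigma_\alpha(x)\|_{\R^d} \leq C_2(1 + \|x\|_{\R^d})$ is what keeps all the moments of $X_t$ and $\hat X^i$ finite throughout.

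The main obstacle I anticipate is the supremum over $t$ inside the expectation: I must control $\sup_{0 \leq t \leq T}$ of the error process rather than only its values at the grid points $t_i$. This forces a split of each stochastic integral into its martingale part, bounded through the Burkholder--Davis--Gundy (or Doob $L^2$) inequality, and its finite-variation part, bounded through H\"older's inequality, combined with the continuous-time interpolation $\hat X_{t_i+t}$ between grid points. Keeping all constants uniform as $n \to \infty$ and cleanly separating the martingale and drift contributions of each multiple integral is the most delicate part of the bookkeeping.
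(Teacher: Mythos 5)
The paper does not actually prove this lemma: it is quoted directly as \cite[Theorem~10.7.1 and Corollary~10.7.2]{kloeden1992stochastic}, so there is no in-paper argument to compare against. Your sketch correctly reconstructs the standard proof from that reference: iterate the Stratonovich chain rule until the multi-index leaves the hierarchical set $\calA_\gamma$, so that the remainder is indexed by $\calA_\gamma^+\setminus\calA_\gamma$ and condition \eqref{Halpha} guarantees those iterated integrals are well defined; establish the scaling $\left(\E|J^\alpha_{s,t}|^2\right)^{1/2}\leq C(t-s)^{(l(\alpha)+n(\alpha))/2}$ by induction, converting to It\^o form to use the isometry; then close with a discrete Gronwall recursion, the Lipschitz hypothesis controlling the coefficient-freezing error and the linear-growth hypothesis keeping moments finite, with Doob/Burkholder--Davis--Gundy supplying the supremum. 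One point to tighten: a remainder index satisfies $l(\alpha)+n(\alpha)\geq 2\gamma+1$, so the one-step remainder is $O(\delta^{\gamma+1/2})$ in $L^2$, not $O(\delta^{\gamma+1})$ as you state. The global order $\gamma$ is nevertheless attained because the martingale parts of the local errors at different steps are orthogonal, which is exactly what your recursion $Z_{i+1}\leq(1+C\delta)Z_i+C\delta^{2\gamma+1}$ encodes: the local error enters squared, and summing $n=T/\delta$ terms of size $\delta^{2\gamma+1}$ gives $\delta^{2\gamma}$, whereas a naive triangle-inequality accumulation of $n$ terms of size $\delta^{\gamma+1/2}$ would only yield order $\gamma-\tfrac12$. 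With that correction your outline is sound and matches the argument in the cited source.
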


Let us now consider the result of Lemma~\ref{lem:stratstochtaylor} for the particular case where $\gamma=1$. For $t \in [t_i, t_{i+1}]$, we obtain
\begin{align*}
\hat X_t &= \hat X_{t_{i}} + (t-t_i) \bfsigma_0(\hat X_{t_{i}}) + \sum_{j=1}^k W^j_{t_i,t} \bfsigma_j(\hat X_{t_{i}})  + \sum_{j,l=1}^k  \int_{t_{i}}^{t} W^j_{t_{i},s} \circ \db W_s^l \bfsigma_{(j,l)}(\hat X_{t_{i}})  \\
&= \hat X_{t_{i}} + (t-t_i) \bfsigma_0(\hat X_{t_{i}}) + \sum_{j=1}^k W^j_{t_i,t} \bfsigma_j(\hat X_{t_{i}})  \\
& \qquad + \frac{1}{2} \sum_{j,l=1}^k  W_{t_{i},t}^j W_{t_i,t}^{l} \bfsigma_{(j,l)}(\hat X_{t_{i}})   + \sum_{1\leq j<l\leq k} A^{j,l}_{t_i,t} (\bfsigma_{(j,l)}-\bfsigma_{(l,j)})(\hat X_{t_{i}}), 
\end{align*}
where $A_{s,t}^{j,l} = \frac{1}{2} \int_s^t (W_{s,r}^{j} \circ \db W^{l}_r-  W_{s,r}^{l} \circ \db W^{j}_r)$ is the so-called Lévy area.

We continue by discussing several methods for approximating this Lévy area. One option, see e.g.~\cite{kloeden1992approximation,milstein2013numerical,mrongowius2022approximation} exploits the Fourier expansion of the involved Brownian motions and eliminates a dependency to give, for $j,l\in\{1,\dots,k\}$,
\begin{equation} \label{LevyArea}
    A^{j,l}_{t,t+ h} = 
    \sum_{m=1}^\infty\left(a_{l,m} W^j_{t,t+h} - a_{j,m} W^l_{t,t+h} \right)+  \pi \sum_{m=1}^\infty m(a_{j,m} b_{l,m} -a_{l,m} b_{j,m} )
\end{equation}
in terms of independent Gaussian random variables with $W^j_{t,t+h} \sim N(0, h)$ and
\begin{align*}
    a_{j,m}
    &=\frac{2}{h} \int_0^h \left(W_s^j -\frac{s}{h} W_h^j \right)
    \cos\left(\frac{2\pi ms}{h}\right)\dd s
    \sim N\left(0, \frac{h}{2\pi^2 m^2}\right),\\
    b_{j,m}
    &=\frac{2}{h} \int_0^h \left(W_s^j -\frac{s}{h} W_h^j \right)
    \sin\left(\frac{2\pi ms}{h}\right)\dd s
    \sim N\left(0, \frac{h}{2\pi^2 m^2}\right).
\end{align*}
We can then approximate the Lévy area by truncating the series in \eqref{LevyArea}.

A second option which only requires the simulation of independent Gaussian random variables makes use of the polynomial decomposition of Brownian motion developed in~\cite{foster20,semicircle} and results in the expansion, see~\cite{foster_habermann,KuznetsovLevyArea1},
\begin{equation}\label{eq:polyapprox}
    A_{t,t+h}^{j,l}
    =\frac{1}{2}\left(c_{l,1} W_{t,t+h}^j - c_{j,1} W_{t,t+h}^l \right)
    +\frac{1}{2}\sum_{m=1}^\infty\left(c_{j,m}c_{l,m+1}-c_{j,m+1}c_{l,m}\right),
\end{equation}
with independent Gaussian random variables given by $W_{t,t+h}^j\sim N(0,h)$ as well as
\begin{displaymath}
    c_{j,m}\sim N\left(0,\frac{h}{2m+1}\right).
\end{displaymath}
Truncating the series in~\eqref{eq:polyapprox} then again yields an approximation for the Lévy area.

If we have a discretisation $0 =t_0 < \cdots < t_n=T$ with $\delta = t_{i+1}- t_i$ then we can obtain the bound in Lemma~\ref{lem:stratstochtaylor} with approximated Lévy area under appropriate bounds. If $\hat A_{t,t+h}^{j,l}$ is an approximation for the Lévy area $A_{t,t+h}^{j,l}$ such that $\E^{x_0}(\sup_{t_i \leq t \leq t_{i+1}}| A^{j,l}_{t_{i},t} - \hat A^{j,l}_{t_i,t}| ) \leq C_0 \delta^3$ on each interval $[t_i,t_{i+1}]$, and if we replace $ A^{j,l}_{t_i,t}$ by $\hat A^{j,l}_{t_i,t}$ in the definition of $(\hat X_t)_{t\in[0,T]}$, then by \cite[Corollary~10.7.3]{kloeden1992stochastic}, we still have
$$\E^{x_0}\left(\sup_{0 \leq t \leq T} \| X_t - \hat X_t\|_{\R^d} \right) \leq C \delta.$$

\begin{remark}
Since we are working with local coordinates, we can identify vectors $v = \sum_{i=1}^d v^i \partial_{x^i}$ with the point $(v^1,\dots, v^d)$. Under this correspondence, we can identify $\bfsigma_j$ with $\sigma_j$ and $\bfsigma_{(j,l)}$ with $\bar{\nabla}_{\sigma_j} \sigma_l$ for $\bar\nabla$ defined by \eqref{FlatConnection}. It follows that $\hat X_t$ for $t \in [t_i, t_{i+1}]$ can be expressed as
\begin{align*}
\hat X_t 
&= \hat X_{t_{i}} + (t-t_i) \sigma_0(\hat X_{t_{i}}) + \sum_{j=1}^k W^j_{t_i,t} \sigma_j(\hat X_{t_{i}})  \\
& \qquad + \frac{1}{2} \sum_{j,l=1}^k  W_{t_{i},t}^j W_{t_i,t}^{l} \bar{\nabla}_{\sigma_j} \sigma_{l}(\hat X_{t_{i}})  + \sum_{1\leq j<l\leq k} A^{j,l}_{t_i,t} [\sigma_j, \sigma_l](\hat X_{t_{i}}).
\end{align*}
\end{remark}

\subsection{Heisenberg group} \label{sec:Heis}
Let us consider the space $M=\mathbb{R}^{2k+1}$ with coordinates $q =(x,y,z)$ for $x,y \in \mathbb{R}^k$ and $z \in \mathbb{R}$. We equip $M$ with a multiplication rule such that if $q = (x,y,z)$ and $\tilde q = (\tilde x, \tilde y, \tilde z)$, then
\begin{align*}
    \ell_{\tilde q}(q) =\tilde q \cdot q
    & = \left(\tilde x+x, \tilde y+y, \tilde z+ z+\frac{1}{2} \left(\langle \tilde x, y \rangle_{\mathbb{R}^k} - \langle x, \tilde y\rangle_{\mathbb{R}^k}\right) \right).
\end{align*} 
Here, $\ell_{\tilde q}$ denotes the left translation with respect to $\tilde q$, i.e., multiplication by $\tilde q$ on the left. Observe that this multiplication is not abelian. With respect to this group structure, we have the identity element $0 =(0,0,0)$ and inverses given by $(x,y,z)^{-1} = (-x,-y,-z)$. We further define the sub-Riemanian structure $(E,g)$ on $M$ where $E$ is the rank $2k$ subbundle spanned by the vector fields
$$\sigma_j = \partial_{x^j} - \frac{y^j}{2} \partial_z, \qquad  \tau_j = \partial_{y^j} + \frac{x^j}{2} \partial_z, \qquad j\in\{1, \dots, k\},$$
and the sub-Riemannian metric $g$ is defined uniquely by requiring $(\sigma_1,\tau_1,\dots,\sigma_k,\tau_k)$ to be an orthonormal frame for $E$. The subbundle $E$ is bracket-generating because $[\sigma_i, \tau_j] = \delta_{i,j} \partial_z$.
We additionally observe that the above vector fields are \emph{left-invariant}, meaning that, for any smooth function $\varphi\colon M\to\R$ and for $q\in M$, we have
$$\sigma_j (\varphi \circ \ell_q) = (\sigma_j \varphi) \circ \ell_q,\qquad \tau_j(\varphi \circ \ell_q) = (\tau_j \varphi) \circ \ell_q.$$
As a consequence, if $d_g$ denotes the sub-Riemannian distance on $(M,E,g)$ and we write $f(q) := d_g(0,q)$ then
$$d_g(\tilde q, q) = d_g(0, \tilde q^{-1} \cdot  q) = f( \tilde q^{-1} \cdot q)  .$$
If we choose $\db\mu$ to be the usual Lebesgue measure on $\mathbb{R}^{2k+1}$ then the corresponding sub-Laplacian $\Delta=\Delta_{E,g,\db\mu}$ on $M=\R^{2k+1}$ is given by
$$\Delta = \sum_{j=1}^k \sigma_j^2 + \sum_{j=1}^k \tau_j^2.$$
This sub-Laplacian $\Delta$ is also left-invariant, i.e., for a smooth function $\varphi\colon M\to\R$ and $q\in M$, we have $\Delta (\varphi \circ \ell_q) = (\Delta \varphi) \circ \ell_q$. We now consider the stochastic process $(X_t)_{t\geq 0} = (X_t^{0})_{t\geq 0}$ on $M$ with infinitesimal generator $\frac{1}{2}\Delta$ and starting from $0\in M$. Alternatively, $(X_t)_{t\geq 0}$ can be characterised as the unique strong solution to the Stratonovich stochastic differential equation
\begin{equation} \label{SDEHeis} \dd X_t = \sum_{j=1}^k \sigma_j(X_t) \circ  \db W^j_t + \sum_{j=1}^k \tau_j(X_t) \circ \db W^{k+j}_t,\end{equation}
subject to $X_0=0$ and where $(W_t^1, \dots, W^{2k}_t)_{t\geq 0}$ is a standard Brownian motion on~$\R^{2k}$. The stochastic differential equation~\eqref{SDEHeis} subject to $X_0=0$ has the explicit solution existing for all times $t\geq 0$ and defined by
\begin{align*}
X_t & = \left(W_t^1, \dots, W^k_t, W^{k+1}_t,\dots,W_t^{2k}, \frac{1}{2} \sum_{j=1}^k \int_0^t (W^{j}_s \circ \db W_s^{k+j} - W^{k+j}_s \circ \db W_s^{j})  \right) \\
& = \left(W_t^{1}, \dots, W_t^{2k}, \sum_{j=1}^k A^{j,k+j}_{0,t} \right).
\end{align*}
If we take another initial point $q \in M$, then the solution $(X_t^{q})_{t\geq 0}$ to \eqref{SDEHeis} subject to $X_0^q=q$ is given by $X_t^{q} = \ell_{q}(X_t)$ for $t\geq 0$. Furthermore, we have
$$X_{t_i +t} = X_{t_i} \cdot X_{t_i,t_i+t} \quad\text{with}\quad X_{t_i,t_i+t} = \left(W_{t_i,t_i+t}^1,\dots, W_{t_i,t_i+t}^{2k}, \sum_{j=1}^k A_{t_i,t_i+t}^{j,k+j}\right).$$

From the explicit expression above, we see that the stochastic process $(X_t)_{t\geq 0}$ and the associated density $p_t(0,q) = p_t(q)$ can be simulated using approximations of the Lévy area described in Section~\ref{sec:TaylorApproximations}. We further note that since the sub-Laplacian $\Delta$ is left-invariant, we have $p_t(\tilde q, q) = (p_t \circ \ell_{\tilde q^{-1}})(q)$.

Let us now look at an explicit formula for $p_t$. By adapting the expression derived in~\cite{gaveau1977principe} to our normalisation for the vector fields used to define the sub-Riemannian structure $(E,g)$, we obtain
 $$p_t(q) = \frac{4}{(2\pi t)^{k+1}} \int_{-\infty}^{\infty} \left( \frac{2\lambda}{\sinh 2\lambda} \right)^k  \exp\left(\frac{4\im\lambda z}{t}-2\lambda \coth(2\lambda) \frac{\|x\|_{\R^k}^2 +\|y\|_{\R^k}^2}{2t}\right)\dd\lambda.$$
As further shown in~\cite{gaveau1977principe}, we then have the approximation, as $t\downarrow 0$,
$$\log p_t(q) \approx -\frac{d_g(0,q)^2}{2t} = - \frac{f(q)^2}{2t},$$
and consequently
$$\log p_t(\tilde q, q) = \log p_t(\tilde q^{-1} \cdot q) \approx - \frac{f(\tilde q^{-1} \cdot q)^2}{2t} = -\frac{d_g(\tilde q,q)^2}{2t}.$$
Unfortunately, computing the distance $f$ explicitly is very expensive. However, it is well known, see e.g.~\cite{ABB}, that
$$f(x,y,0) = \sqrt{\|x\|_{\mathbb{R}^k}^2 + \|y\|_{\mathbb{R}^k}^2} \qquad\text{and}\qquad f(0,0,z) = 2 \sqrt{\pi |z|}.$$
Define $\hat f\colon M\to[0,\infty)$ by
$$\hat f(x,y,z)^2 = f(x,y,0)^2 + f(0,0,z)^2 = \| x\|_{\mathbb{R}^k}^2 + \| y\|_{\mathbb{R}^k}^2 +4 \pi |z|.$$
Following the proof of \cite[Example~4.1]{grong2022geometric}, we also have $f(x,y,0) \leq f(x,y,z)$ and $f(0,0,z) \leq 2f(x,y,z)$. Hence, we obtain that
\begin{align*}
\frac{1}{8} \hat f(x,y,z)^2 & \leq \frac{1}{4}\max \{ f(x,y,0)^2, f(0,0,z)^2\} \\
& \leq f(x,y,z)^2  \leq ( f(x,y,0) + f(0,0,z))^2 \leq 2\hat f(x,y,z)^2.
\end{align*}
If we further consider $\hat d\colon M\times M\to\R$ given by $\hat d(\tilde q, q) = \hat f(\tilde q^{-1} \cdot q)$, then $\hat d$ and $d_g$ are Lipschitz equivalent left-invariant metrics agreeing on the distance from $0$ to points on the $z$-axis and to points in the $(x,y)$-plane. We therefore introduce the following score approximation
$$\hat S_t(\tilde q, q) := - \frac{1}{2t} \nabla^{q,E} \hat d(\tilde q, q)^2 = -\frac{1}{2t} \nabla^{q,E} \hat f(\tilde q^{-1} \cdot q)^2.$$
We further note that
\begin{align*}
& \nabla^{E} \left(\hat f^2\circ \ell_{\tilde q^{-1}}\right)(q) = \nabla^{q,E} \hat f\left(\tilde q^{-1} \cdot q  \right)^2 \\
& = \sum_{j=1}^k \left( 2 (x^j-\tilde x^j)- 2\pi(y^j - \tilde y^j)\sgn\left(z - \tilde z - \frac{1}{2} (\langle \tilde x,  y \rangle_{\mathbb{R}^k} - \langle x, \tilde y\rangle_{\mathbb{R}^k})\right)\right) \sigma_j( q)\\
& \quad + \sum_{j=1}^k \left(2  (y^j - \tilde y^j) + 2\pi (x^j - \tilde x^j)\sgn\left(z - \tilde z - \frac{1}{2} (\langle \tilde x,  y \rangle_{\mathbb{R}^k} - \langle x, \tilde y\rangle_{\mathbb{R}^k})\right)\right) \tau_j(q).
\end{align*}
It follows that if writing $\hat S_t(\tilde q, q) = \sum_{j=1}^k \hat S_t^j(\tilde q, q) \sigma_j(q) + \sum_{j=1}^k \hat S_t^{k+j}(\tilde q, q) \tau_j(q)$
then, for $l\in\{1,\dots,2k\}$,
$$\hat S_t^{l}(\tilde q, q) = \hat S_t^{l}(0,\tilde q^{-1} \cdot q) =: \hat S_t^{l}(\tilde q^{-1} \cdot q),$$
which are given by, for $j\in\{1, \dots, k\}$,
\begin{displaymath}
    -t \hat S_t^j(q) =  x^j- \pi y^j\sgn\left(z \right)\qquad\text{and}\qquad
    -t \hat S_t^{k+j}(q) = y^j  + \pi x^j\sgn\left(z\right).
\end{displaymath}

We now continue with the following procedure. Suppose we have a discretisation  $0 = t_0 < t_1 < \cdots < t_n = T$ with $t_{i+1} - t_i =\delta = \frac{T}{n}$. For each $t\in(0, \delta]$, we define
$$\hat X_{t_i+t} = \hat X_{t_i} \cdot \hat X_{t_i,t_i+t},$$
with
$$\hat X_{t_i,t_i+t} = \left(W_{t_i,t_i+t}^{1}, \dots, W_{t_i,t_i+t}^{2k},  \sum_{j=1}^k \hat A^{j,k+j}_{t_i,t_i+t} \right).$$
Letting $\hat \pp_{s,t}$ denote the probability measure on $M \times M$ associated with $(\hat X_s,\hat X_t)$, we then define the approximation $\hat e_{s,t}(\theta)$ to $e_{s,t}(\theta)$ by
$$
\hat e_{s,t}(\theta) = \int_{M \times M} \langle S^\theta_t(y), S^\theta_t(y) - 2\hat S_{t-s}(z,y)\rangle^2_{g(y)} \, \db \pp_{s,t}(z,y).$$
We write $S^\theta_t = \sum_{j=1}^k S_t^{\theta,j} \sigma_j + \sum_{j=1}^k S_t^{\theta,k+j} \tau_j$. In terms of $\hat \bfS_t = (\hat S^1_t, \dots, \hat S^{2k}_t)$ as well as $\bfS_t^\theta = (S^{\theta,1}_t, \dots, S^{\theta,2k}_t)$, we obtain
\begin{align*}
\hat e_{t_i,t_i+t}(\theta) &= \E\left[\left\langle \bfS^\theta_{t_i+t}(\hat X_{t_i+t}), \bfS^\theta_{t_i+t}(\hat X_{t_i+t}) - 2\hat \bfS_t(\hat X_{t_i,t_i+t} ) \right\rangle_{\mathbb{R}^{2k}} \right] \\
&= \E\left[\left\langle \bfS^\theta_{t_i+t}(\ell_{\hat X_{t_i}}\hat X_{t_i,t_i+t}), \bfS^\theta_{t_i+t}(\ell_{\hat X_{t_i}}\hat X_{t_i,t_i+t}) - 2\hat \bfS_t(\hat X_{t_i,t_i+t} ) \right\rangle_{\mathbb{R}^{2k}} \right]. 
\end{align*}
We remark that
$$-\hat \bfS_t(q) = \frac{1}{t} \begin{pmatrix} x \\ y \end{pmatrix} + \frac{\pi}{t} \sgn(z) \begin{pmatrix} -y \\ x \end{pmatrix}  =: - \frac{1}{t} \hat \bfS(q).$$

Using these observation in summary, we describe how $\hat \scrE_{0,T}(\theta)$ is presented with a discretisation of $[0,T]$ for $\delta = \frac{T}{n}$. We generate $K \cdot n \cdot 2k$ random samples $\Delta_i W^{(l),j}$ from the distribution $N(0,\delta)$, and further $K \cdot K_2 \cdot n \cdot 2k$ numbers $\Delta_i c_{j,m}^{(l)}$ sampled from $N(0, \frac{\delta}{2m+1})$. Here $1 \leq i \leq n$, $1\leq j \leq 2k$, $1 \leq l\leq K$ and $1 \leq m \leq K_2$. We then determine
\begin{align*}
    \Delta_i A^{(l), j,k+j} &= \frac{1}{2}\left(\Delta_i c_{k+j,1}^{(l)} \Delta_i W^{(l),j} - \Delta_i c_{j,1}^{(l)} \Delta_i W^{(l),k+j} \right) \\
    & \qquad +\frac{1}{2}\sum_{m=1}^{K_2-1} \left( \Delta_i c_{j,m}^{(l)} \Delta_i c_{k+j,m+1}^{(l)} - \Delta_i c_{j,m+1}^{(l)} \Delta_i c_{k+j,m}^{(l)} \right)
\end{align*}
and set
$$\Delta_i A^{(l)} = \sum_{j=1}^k \Delta_i A^{(l),j,k+j}, \qquad \Delta_i \hat X^{(l)} = (\Delta_i W^{(l),1},\dots, \Delta_i W^{(l),2k}, \Delta_i A^{(l)}).$$
We further define $\hat X_0^{(l)} =x_0$ and iteratively
$$\hat X_{i+1}^{(l)} = \hat X_{i}^{(l)} \cdot \Delta_{i+1} \hat X^{(l)}.$$
We compute the loss function by
\begin{align} \label{eq:denoising_loss_Heisenberg}
\hat \scrE_{0,T}(\theta)
& \approx \frac{1}{K} \sum_{l=1}^K \sum_{i=1}^n \left\langle \bfS^\theta_{i \delta}(\hat X_{i}^{(l)}), \delta \, \bfS^\theta_{i \delta}(\hat X_{i}^{(l)}) - 2\hat \bfS(\Delta_i \hat X^{(l)} ) \right\rangle_{\mathbb{R}^{2k}} \\ \nonumber
& = \frac{\delta}{K} \sum_{l=1}^K \sum_{i=1}^n \left\| \bfS^\theta_{i \delta}(\hat X_{i}^{(l)}) - \frac{1}{\delta} \hat \bfS(\Delta_i \hat X^{(l)} ) \right\|_{\mathbb{R}^{2k}}^2+ C
\end{align}
and can now apply Algorithm~\ref{algo:Divergence} with the loss $\scrE_{0,T}(\theta)$ replaced by $\hat\scrE_{0,T}(\theta)$.

\subsection{General case and adapted coordinates} \label{sec:AdaptedCoordinates}
Despite us using very particular properties of the Heisenberg group in Section~\ref{sec:Heis},
it is possible to perform local approximations for a general sub-Riemannian manifold in a similar manner.

Let $(M,E,g)$ be a sub-Riemannian manifold where the manifold $M$ has dimension $d$. Let $p_t$ be the Dirichlet heat kernel for an operator $\frac{1}{2} L =\frac{1}{2} \Delta + Z$.
We will make use of small-time asymptotics, see~\cite{leandre1987majoration,leandre1987minoration}, saying that, for any $x_1, x_2 \in M$, the transition density $p_t$ satisfies the limit
\begin{equation}\label{eq:sRvaradhan}
    \lim_{t\to 0} 2t \log p_{t}(x_1,x_2) = - d_g(x_1,x_2)^2.
\end{equation}
In order to use this result, we need an approximation of the distance $d_g$.

For the remaining results in this section, we follow Montgomery~\cite[Chapter~2.4]{montgomery2002tour}. We consider \emph{adapted coordinates}, which are also called \emph{privileged coordinates} in~\cite{bellaiche1996tangent}. For a given point $x_0 \in M$, we define a collection of integers $k_1,k_2,\dots,k_s$ which satisfy $k= k_1 \leq  k_2 \leq \cdots \leq k_s=d$ such that $k_1$ is the rank of $E$, $k_2$ is the rank of the space spanned by vector fields in $E$ along with their first order Lie brackets at $x_0$, $k_3$ is the rank of the space which also includes their second order Lie brackets at $x_0$, and so on. Due to the bracket-generating assumption on $E$, there exists some minimal $s\in\N$ such that $k_s=d$, which is \emph{the step} of $E$. The vector $(k_1, \dots, k_s)$ is then called \emph{the growth vector} of $E$ at $x_0$.
For instance, in the $(2k+1)$-dimensional Heisenberg group, the growth vector is $(2k,2k+1)$ at all points. With the convention that $k_0=0$, we further define \emph{weights} $\nu_1, \dots, \nu_d$ such that $\nu_i$ for $i\in\{1,\dots, d\}$ is the maximal number satisfying
$$k_{\nu_i-1} \leq i.$$
In the Heisenberg group, we have $(\nu_1,\dots,\nu_{2k},\nu_{2k+1}) =(1,\dots, 1,2)$.

A coordinate system $(y^1,\dots,y^d)$ centered at $x_0$ is then called \emph{adapted at $x_0$} if, for any $l \leq \nu_i-1$ and for any selection of vector fields $V_1,\dots, V_l$ in $E$,
$$\left(V_1\dots V_l y^i\right)(x_0) = 0.$$
Note that the condition for $\nu_i =1$ is already satisfied for all coordinates because $y(x_0) = 0$ by assumption. Relative to an adapted coordinate system, we define \emph{the box metric} $d_{box}$ by, for $x_1$ and $x_2$ in the domain of the coordinate system $y$,
$$d_{box}(x_1, x_2) =\max_{1\leq i \leq d} \left\{ |y^i(x_1) - y^i(x_2)|^{1/\nu_i}\right\}.$$
\emph{The ball box theorem} says, for all $x$ in the domain of $y$ and for constants $c$ and $C$,
$$c d_{box}(x_0,x) \leq d_g(x_0,x) \leq C d_{box}(x_0,x).$$
In order to have a smoother metric, we instead consider $\hat d$ defined by
$$\hat d(x_1,x_2)^2 = \sum_{i=1}^d |y^i(x_1) - y^i(x_2)|^{2/\nu_i}.$$
The previous inequalities imply that there are positive constants $\tilde c$ and $\tilde C$ such that
\begin{equation}\label{eq:dhatvsdg}
    \tilde c \hat d(x_0,x)^2 \leq d_g(x_0,x)^2 \leq \tilde C \hat d(x_0,x)^2.
\end{equation}
Motivated by \eqref{eq:sRvaradhan} and \eqref{eq:dhatvsdg}, we introduce the approximation
$$\log p_t(x_0,x) \approx  -\frac{\hat d(x_0,x)^2}{2t},$$
leading to our final approximated score
$$\hat S_t(x_0,x) := -\frac{1}{2t}\nabla^{x,E} \hat d(x_0,x)^2.$$

We conclude that if $(\sigma_1, \dots, \sigma_k)$ is an orthonormal frame for $E$ with respect to $g$ and we write $\sigma_j = \sum_{i=1}^d \sigma^i_j \partial_{y^i}$ for $j\in\{1,\dots,k\}$, then our approximation of the score is given by
\begin{equation}\label{eq:approximatescore}
\hat S_t(x_0,x) = - \frac{1}{2t} \nabla^{x,E} \hat d(x_0,x)^2 = - \frac{1}{t} \sum_{j=1}^k \left( \sum_{i=1}^d \sigma_j^i(x) \frac{ y^i |y^i|^{2/\nu_i-2}}{\nu_i} \right) \sigma_j(x).
\end{equation}

\begin{remark}
Note that, for any selection of strictly positive constants $c^1,\dots, c^d$, the metric $\hat d^c$ defined by
$$\hat d^c(x_1,x_2)^2 = \sum_{i=1}^d c^i |y^i(x_1) - y^i(x_2)|^{2/\nu_i}$$
will also satisfy \eqref{eq:dhatvsdg}. Hence, for each case, we can scale this metric to make it match the true sub-Riemannian metric close to $x_0$ for each coordinate, as we did for the Heisenberg group in Section~\ref{sec:Heis}. A recommended method for finding a good choice of $c^1, \dots, c^d$ is to compute explicit lengths along each coordinate axis according to the sub-Riemannian metric of the nilpotent homogeneous approximation at $x_0$, as described in \cite[Section~5.3]{bellaiche1996tangent}.
\end{remark}

\subsection{Simulation of a step in the general geometry} \label{sec:SimStep}
For simulating the stochastic process $(X_t)_{t\in[0,T]}$ in the general sub-Riemannian setting, we use the stochastic Taylor expansion discussed in Section~\ref{sec:TaylorApproximations} and the adapted coordinates reviewed in Section~\ref{sec:AdaptedCoordinates}. For the sake of simplicity, we reduce our considerations to the case where the bracket-generating distribution $E$ has step 2, that is, where the tangent bundle $TM$ is spanned by $E$ and its first order Lie brackets.

As before, let $(\sigma_1, \dots, \sigma_k)$ be an orthonormal frame for $E$, let $(W_t)_{t\geq 0}$ be a standard Brownian motion on $\R^k$ and assume that $(X_t)_{t\in[0,T]}$ is the unique strong solution to the Stratonovich stochastic differential equation
$$\db X_t = \sum_{j=1}^k \sigma_j(X_t) \circ \db W_t^j + \sigma_0(X_t) \dd t,\qquad X_0=x_0.$$

Additionally consider vector fields $\sigma_{k+1}, \dots, \sigma_{d}$ on $M$ such that $(\sigma_1, \dots, \sigma_d)$ is a frame for the full tangent bundle $TM$.
Let $(x^1, \dots, x^d)$ be any choice of local coordinate system for $M$ around the point $x_0 = (x^1_0, \dots, x^d_{0})\in M$. We introduce a matrix-valued function $\sigma =(\sigma^i_{l})$ such that $\sigma_l = \sum_{i=1}^d \sigma^i_{l} \partial_{x^i}$ for $l\in\{1,\dots,d\}$, and we denote its inverse by $\sigma^{-1} = (\sigma_{-l}^i)$. We then define a new local coordinate system $(y^1, \dots, y^d)$ for $M$ centred at $x_0$ by setting
\begin{equation} \label{AdaptedStep2} y^i = \sum_{l=1}^d \sigma^{i}_{-l}(x_0) (x^l- x^l_{0}).\end{equation}
This yields an adapted coordinate system at $x_0$.
Letting $(Y_t)_{t\in[0,T]}$ defined by $Y_t = y(X_t)$ be the process in this adapted coordinate system and taking the Taylor expansion of order 1, we obtain that, for $t\in[0,\delta]$ and $i\in\{1,\dots,k\}$ as well as $m\in\{k+1,\dots,d\}$,
$$Y_t^i \approx  W_t^i + \sum_{l=1}^d \sum_{j_1,j_2=1}^k \left(\int_0^t W_s^{j_1} \circ \db W_s^{j_2} \right) \sigma^{i}_{-l}(x_0) (\bar{\nabla}_{\sigma_{j_1}}\sigma_{j_2})^l(x_0) + \sum_{l=1}^d t \sigma^{i}_{-l}(x_0) \sigma^l_0(x_0) $$
as well as
$$Y_t^{m} \approx  \sum_{l=1}^d \sum_{j_1,j_2=1}^k \left(\int_0^t W_s^{j_1} \circ \db W_s^{j_2} \right) \sigma^{m}_{-l}(x_0) (\bar{\nabla}_{\sigma_{j_1}} \sigma_{j_2})^l(x_0) + \sum_{l=1}^d t \sigma^{m}_{-l}(x_0) \sigma^l_0(x_0).$$
Returning to our original coordinate system $(x^1,\dots,x^d)$ and identifying vectors in the basis $(\partial_{x^1}, \dots, \partial_{x^d})$ with points in the domain of $(x^1,\dots,x^d)$, we get the approximation
$$X_t - x_{0} \approx \sum_{j=1}^k W_t^j \sigma_{j}(x_0)  + \sum_{j_1,j_2=1}^k \left(\int_0^t W_s^{j_1} \circ \db W_s^{j_2} \right) (\bar{\nabla}_{\sigma_{j_1}}\sigma_{j_2})(x_0) + t \sigma_0(x_0).$$
We now have the following beneficial proposition.
\begin{proposition}\label{propn:approxpropn}
Suppose that $\spn\{ \sigma_j, [\sigma_{j}, \sigma_{l}] : j,l\in\{1,\dots, k\}\}$ equals the full tangent space at every point on $M$. For a given point $x_0\in M$,  we define a stochastic process $(\hat X_t)_{t\geq 0}$ by $\hat X_0 = x_0$ and
\begin{equation}\label{eq:approxpropn}
    \hat X_t - x_{0} = \sum_{j=1}^k \sigma_{j}(x_0) W_t^j + \sum_{j_1,j_2=1}^k  (\bar{\nabla}_{\sigma_{j_1}}\sigma_{j_2})(x_0) \int_0^t W_s^{j_1} \circ \db W_s^{j_2}+ \sigma_0(x_0)t.
\end{equation}
Then $\hat X_t$  for $t>0$ has a positive smooth density with respect to Lebesgue measure.
\end{proposition}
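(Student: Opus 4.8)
The plan is to reduce $\hat X_t$ to a polynomial functional of a lower-dimensional object with a known smooth positive density, and then transport that regularity. Writing $a_j = \sigma_j(x_0)$, $B_{j_1 j_2} = (\bar\nabla_{\sigma_{j_1}} \sigma_{j_2})(x_0)$ and $c = \sigma_0(x_0)$, which are all \emph{fixed} vectors in $\R^d$ once $x_0$ is frozen, I would first split the Stratonovich iterated integral as $\int_0^t W_s^{j_1} \circ \db W_s^{j_2} = \tfrac12 W_t^{j_1} W_t^{j_2} + A_{0,t}^{j_1,j_2}$ into its symmetric part and the L\'evy area. Since the flat connection $\bar\nabla$ of \eqref{FlatConnection} is torsion free, $B_{j_1 j_2} - B_{j_2 j_1} = [\sigma_{j_1}, \sigma_{j_2}](x_0)$, so \eqref{eq:approxpropn} becomes
$$\hat X_t = x_0 + tc + \sum_{j=1}^k a_j W_t^j + \tfrac12 \sum_{j_1,j_2=1}^k B_{j_1 j_2} W_t^{j_1} W_t^{j_2} + \sum_{1\le j_1<j_2\le k} [\sigma_{j_1},\sigma_{j_2}](x_0)\, A_{0,t}^{j_1,j_2} =: x_0 + tc + F(U_t),$$
where $U_t = (W_t, (A_{0,t}^{j_1,j_2})_{j_1<j_2})$ is precisely the Brownian motion at time $t$ on the free step-$2$ nilpotent group over $\R^k$ and $F$ is an explicit polynomial map of degree at most $2$. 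The generator of $U_t$ is the left-invariant sub-Laplacian $\sum_j \sigma_j^2$, which is bracket generating of step $2$, so by \cite{Hor67} together with \cite{StVa72} the law of $U_t$ has a smooth, everywhere strictly positive density $\rho_t$ with respect to Lebesgue measure on $\R^{k+\binom{k}{2}}$. The spanning hypothesis enters through $F$: the image of $\db F_{(0,A)}$ is $\spn\{a_j, [\sigma_{j_1},\sigma_{j_2}](x_0)\} = \R^d$, so $F$ is a submersion on a nonempty, hence dense, open set.

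For the regularity I would run the Malliavin calculus criterion. As a polynomial Wiener functional $\hat X_t$ lies in $\mathbb{D}^\infty$, and a direct computation gives, for $r \in [0,t]$,
$$D_r^m \hat X_t = h_m(r) := a_m + \sum_{j} B_{jm} W_r^j + \sum_{j} B_{mj}(W_t^j - W_r^j), \qquad m\in\{1,\dots,k\},$$
so the Malliavin matrix is $\gamma_t = \sum_{m=1}^k \int_0^t h_m(r) h_m(r)^\top \db r$. To see $\gamma_t$ is almost surely invertible, note that $\xi^\top \gamma_t \xi = 0$ forces $\xi \cdot h_m(r) \equiv 0$ on $[0,t]$; rewriting $\xi \cdot h_m(r) = c_m(\xi) + \sum_j (\xi \cdot [\sigma_j,\sigma_m](x_0)) W_r^j$ with $c_m(\xi) = \xi\cdot a_m + \sum_j (\xi\cdot B_{mj}) W_t^j$, the almost sure linear independence of $1, W^1, \dots, W^k$ on $[0,t]$ then forces $\xi$ orthogonal to every $[\sigma_j,\sigma_m](x_0)$ together with $c_m(\xi)=0$ for all $m$. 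Evaluating this linear system at the value $W_t = 0$ and invoking the spanning hypothesis shows its only solution is $\xi = 0$; since $W_t$ has a density and the degenerate configurations form a proper algebraic subset of $\R^k$, the matrix $\gamma_t$ is almost surely positive definite, which already yields existence of a density.

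The hard part is upgrading this to smoothness, i.e.\ proving $(\det \gamma_t)^{-1} \in \bigcap_{p\ge1} L^p$, and this is exactly where the step-$2$ bracket-generating hypothesis must be used quantitatively rather than just almost surely. I would bound the lower tail of $\inf_{|\xi|=1} \xi^\top \gamma_t \xi$ by splitting $\xi$ into its component along $\spn\{[\sigma_{j_1},\sigma_{j_2}](x_0)\}$, where the $W_r^j$ terms dominate and Brownian small-ball estimates give super-polynomial decay, and its component in the orthogonal complement, where the $c_m(\xi)$ terms dominate and the nondegeneracy of $W_t$ controls the probability. In carrying this out one must check that the symmetric second-order coefficient $\tfrac12(B_{j_1 j_2}+B_{j_2 j_1})$ does not generate fold-type critical points of $F$ that would destroy the bound; in the adapted coordinates of Section~\ref{sec:SimStep} this symmetric part vanishes, as it does for the Heisenberg group, so that $F$ is effectively linear and the estimate becomes transparent. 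With the negative moments in hand the Malliavin criterion delivers a density in $C^\infty(\R^d)$, and positivity then follows from the coarea representation $p_t(x) = \int_{F^{-1}(x-x_0-tc)} \rho_t\, (\det \db F\, \db F^\top)^{-1/2}\, \db\mathcal{H}$, which is strictly positive since $\rho_t>0$ everywhere and the fibres are nonempty because $F$ is onto; equivalently, the Stroock--Varadhan support theorem gives full support once one observes that smooth controls realize arbitrary endpoints and L\'evy areas. I expect the negative-moment estimate to be the main obstacle, precisely because it is where the geometric step-$2$ assumption has to be exploited beyond the easy almost-sure nondegeneracy.
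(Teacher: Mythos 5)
You take a genuinely different route from the paper: Malliavin calculus for the polynomial Wiener functional $\hat X_t$, versus the paper's lift of the dynamics to $\R^k\times\R^d$ via the frame $\hat\sigma_j$, an application of H\"ormander's theorem on the orbit $O$ of the lifted frame, and a pushforward of the resulting density through the projection $\pi|_O$. Your computation of $D_r^m\hat X_t=h_m(r)$ is correct, and your argument for almost-sure invertibility of $\gamma_t=\sum_m\int_0^t h_m(r)h_m(r)^\top\dd r$ (linear independence of $1,W^1,\dots,W^k$ on $[0,t]$, plus the observation that the residual linear system in $\xi$ degenerates only for $W_t$ in a proper algebraic subset of $\R^k$) is sound; by Bouleau--Hirsch this gives \emph{existence} of a density. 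But that is the easy half, and the proposal leaves both substantive claims of the proposition unproved: you explicitly do not establish $(\det\gamma_t)^{-1}\in\bigcap_{p\geq 1}L^p$, and the positivity argument rests on surjectivity of $F$, which is asserted rather than proved, together with a coarea formula whose weight $(\det \db F\,\db F^\top)^{-1/2}$ blows up at critical points of $F$; the Stroock--Varadhan support theorem you invoke applies to the diffusion $(X_t)$, not to the frozen-coefficient functional $\hat X_t$.

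The gap is not merely technical, and the obstruction is precisely the one you flag and then set aside: the symmetric part of $(\bar{\nabla}_{\sigma_{j_1}}\sigma_{j_2})(x_0)$. It does \emph{not} vanish in the adapted coordinates \eqref{AdaptedStep2} in general --- a linear change of coordinates cannot remove second-order Taylor coefficients of the frame --- and it vanishes for the Heisenberg frame only because of its special left-invariant form. When it is present, $F$ need be neither a submersion everywhere nor onto, and both conclusions can genuinely fail: already for $d=k=1$ with $\sigma_1=(1+x^2)\partial_x$ and $x_0=1$ one finds $\hat X_t-1-\sigma_0(1)t=2W_t^1+2(W_t^1)^2\geq -\frac{1}{2}$, whose law is supported on a half-line with an inverse-square-root singularity at the endpoint, so the negative moments of $\det\gamma_t$ of all orders cannot exist and neither smoothness nor everywhere-positivity holds. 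Any complete argument must therefore control this symmetric term (or restrict to frames for which it is compatible with the span of $\{\sigma_j(x_0)\}$ and the brackets for \emph{all} values of $w$, not just $w=0$). For what it is worth, the paper's own proof meets the same difficulty at the step where it claims $\pi_*(\hat E_{(w,x)}\oplus R_{(w,x)})=T_x\R^d$ for every $(w,x)$, since the spanning hypothesis directly gives this only at $w=0$; so you have correctly located the crux of the proposition, but neither your negative-moment sketch nor your coarea/support argument resolves it.
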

We remark that by using the identity $\int_0^t W_s^{j_1} \circ \db W_s^{j_2} = \frac{1}{2} W_t^{j_1} W_t^{j_2} + A^{j_1,j_2}_{0,t}$ we can rewrite the expression~\eqref{eq:approxpropn} as
\begin{align*}
\hat X_t - x_{0} & = \sum_{j=1}^k \sigma_{j}(x_0) W_t^j + \frac{1}{2} \sum_{j_1,j_2=1}^k  (\bar{\nabla}_{\sigma_{j_1}}\sigma_{j_2})(x_0) W_t^{j_1} W_t^{j_2} \\
& \qquad + \sum_{1\leq j_1<j_2\leq k}  [\sigma_{j_1},\sigma_{j_2}](x_0) A^{j_1,j_2}_{0,t} + \sigma_0(x_0)t.
\end{align*}

\begin{proof}[Proof of Proposition~\ref{propn:approxpropn}]
We consider the space $\R^k \times \R^d$ with coordinates $(w,x)$ and we introduce the vector fields, for $j\in\{1,\dots,k\}$,
\begin{align*}
\hat \sigma_j(w,x) & = \partial_{w_j} + \sum_{i=1}^d \sigma_{j}^i(x_0) \partial_{x_i} + \sum_{j_1=1}^k \sum_{i=1}^d w^{j_1}(\bar{\nabla}_{\sigma_{j_1}}\sigma_{j})^i(x_0) \partial_{x_i} \\
& = \partial_{w_j} +\sigma_j(x_0)  + \sum_{j_1=1}^k w^{j_1} (\bar{\nabla}_{\sigma_{j_1}}\sigma_{j})(x_0).
\end{align*}
We notice that, for $j,l\in\{1,\dots,k\}$ and at any point $(w,x)\in \R^k \times \R^d$,
$$[\hat \sigma_j, \hat \sigma_{l}](w,x) = \sum_{i=1}^d [ \sigma_j, \sigma_{l}]^i(x_0) \partial_{x^i}.$$

Let us now consider the set $O\subseteq \R^k \times \R^d$ of points that can be reached from $(0,x_0)$ by moving tangentially to the subbundle $\hat E = \spn\{ \hat \sigma_j:j\in\{1,\dots,k\}\}$, and further define the constant vector space $R = \spn\{ \sum_{i=1}^d [\sigma_{j_1}, \sigma_{j_2}]^i(x_0) \partial_{x^i}\}_{1\leq j_1 < j_2\leq k}$. By the Orbit theorem \cite{sussmann1973orbits}, the set $O$ is a manifold of dimension $k + \rank R$, whose tangent space is given by
$$T_{(w,x)}O = \hat E_{(w,x)} \oplus R_{(w,x)}, \qquad (w,x) \in O.$$ 
Let $\pi\colon \R^k \times \R^d \to \R^d$ be the projection on the second factor. By our assumption on the brackets, we have $\pi_* (\hat E_{(w,x)}\oplus R_{(w,x)}) = T_x \R^d$ for any $(w,x) \in \R^k \times \R^d$. It follows that $\pi|_O\colon O \to \R^d$ is a surjective submersion with each fiber $\pi^{-1}(x)$ being an embedded manifold.

We observe that $\pi_{*,(w,x)}$ maps $(\ker \pi_{*,(w,x)})^\perp$ bijectively to $T_x\R^d$. Hence, we can define a vector field $\hat \sigma_0$ on $\R^k \times \R^d$ determined uniquely by being in $(\ker \pi_*)^\perp$ and satisfying $\pi_{*,(w,x)} \hat \sigma_0(w,x) = \sigma_0(x_0)$, where we identify the vector $\sigma(x_0)$ and the corresponding constant vector field.

Define a stochastic process $(\Phi_t)_{t\geq 0}$ on $\R^k\times\R^d$ as the unique strong solution to
$$\db\Phi_t = \sum_{j=1}^k \hat \sigma_j(\Phi_t) \circ \db W_t^j + \hat \sigma_0(\Phi_t) \dd t,\qquad \Phi_0 =(0,x_0).$$
Since $\hat E|_O$ is bracket-generating as a subbundle of $TO$, we deduce from \cite{Hor67} and \cite{StVa72} that $\Phi_t$ for $t>0$ has a positive smooth density $\varphi_t$ with respect to Lebesgue measure $\db \mu^O$ on $O$.
Furthermore, by definition we have $\pi(\Phi_t) = \hat X_t$ for $t\geq 0$. Hence, the density
$\hat p_t$ of $\hat X_t$ given by
$\hat p_t(x) = \int_{\pi^{-1}(x)} \varphi_t(w,x) \dd \mu^O|_{\pi^{-1}(x)}$ is the average of $\varphi_t$ on every fiber, which is then also smooth and positive for $t>0$.
\end{proof}

\subsection{General summary}
To summarise the above results, assume that $(E,g)$ is a step 2 sub-Riemannian structure with an orthonormal frame $(\sigma_1, \dots, \sigma_k)$ for $E$, extended with further vector fields $\sigma_{k+1}, \dots, \sigma_{d}$ to a frame for the tangent bundle $TM$, which in local coordinates $(x^1, \dots, x^d)$ is represented as a matrix $(\sigma^i_j)$ with inverse $(\sigma_{-j}^i)$. Let us divide $[0,T]$ into $n$ intervals of length $\delta = \frac{T}{n}$ and set $t_i = i \delta$ for $i\in\{0,\dots,n\}$. We then define an approximation $(\hat X_t)_{t\in[0,T]}$ to $(X_t)_{t\in[0,T]}$ by $\hat X_0 = x_0$ as well as, for $t\in(0,\delta]$ and $i\in\{0,\dots, n-1\}$,
\begin{align*}
\hat X_{t_{i} +t}  & = \hat X_{t_{i}}+ \sum_{j=1}^k \sigma_{j}(\hat X_{t_{i}}) W_{t_i,t_i+t}^j + \frac{1}{2} \sum_{j_1,j_2=1}^k  (\bar{\nabla}_{\sigma_{j_1}}\sigma_{j_2})(\hat X_{t_{i}})  W_{t_i,t_i+t}^{j_1} W_{t_i,t_i+t}^{j_2} \\
& \qquad + \sum_{1 \leq j_1 <j_2 \leq k}  [\sigma_{j_1}, \sigma_{j_2}](\hat X_{t_{i}})  \hat A^{j_1,j_2}_{t_i,t_i+t} + \sigma_0(\hat X_{t_i}) t,
\end{align*}
where $\hat A^{j_1,j_2}_{t_i,t_i+t}$ is a truncation of \eqref{LevyArea} or \eqref{eq:polyapprox}, respectively. We apply this model to sample from the density $\hat p_t(x_0,\cdot)$ of $\hat X_t$.

Moreover, we need a corresponding loss function for the score. For this, we use the adapted coordinates defined in \eqref{AdaptedStep2} for the approximation \eqref{eq:approximatescore} of the score, that is, with
\begin{align*}
\hat f(x)^2 = \hat d(x_0, x)^2  &= \sum_{i=1}^k |y^i|^2 + \sum_{i=k+1}^d |y^i| \\
& = \sum_{i=1}^k \left( \sum_{l=1}^d \sigma_{-l}^i(x_0) (x^l -x_0^l)\right)^2 + \sum_{i=k+1}^d\left|\sum_{l=1}^d \sigma_{-l}^i(x_0) (x^l -x_0^l) \right|,
\end{align*}
we take $\hat S_{t}(x_0,x) = - \frac{1}{2t} \nabla^E \hat f(x)^2$.
Writing
\begin{displaymath}
    \hat S_t(x_0,x)=- \frac{1}{2t} \sum_{j=1}^k \sigma_j(\hat f^2)(x) \sigma_j(x)
    =\sum_{j=1}^k \hat S_t^j(x_0,x) \sigma_j(x),
\end{displaymath}
we then have
\begin{align*}
\hat S_t^j(x_0,x) 
& = - \frac{1}{t} \sum_{i=1}^k \sum_{l_2=1}^d \sigma^{l_2}_j(x) \sigma_{-l_2}^i(x_0) \left( \sum_{l=1}^d \sigma_{-l}^i(x_0) (x^l -x_0^l)\right) \\
& \qquad - \frac{1}{2t} \sum_{i=k+1}^d \sum_{l_2=1}^d \sigma^{l_2}_j(x) \sigma_{-l_2}^i(x_0) \sgn\left(\sum_{l=1}^d \sigma_{-l}^i(x_0) (x^l -x_0^l) \right).
\end{align*}
We remark that the expression above differs from \eqref{eq:approximatescore} in that there the coordinate expressions $\sigma_j^i$ for the vector fields are defined relative to the adapted coordinate system $(y^1,\dots,y^d)$, whereas here these are relative to a given coordinate system $(x^1,\dots, x^d)$. A particular advantage of this formulation is that we do not need to change coordinate system if we change $x_0$.

We now define the loss function $\hat \scrE_{0,T}(\theta)$ for the neural network $S^\theta =\sum_{j=1}^k S^{\theta,j} \sigma_j$ using $\bfS^\theta = (S^{\theta,1}, \dots, S^{\theta,k})$  and $\hat{\bfS} = (\hat S^1, \dots, 
\hat S^k)$ by
\begin{align*}
    \hat \scrE_{0,T}(\theta) & = \int_0^T \E^{x_0}\left[\left\langle \bfS_t^\theta(\hat X_t), \bfS_t^\theta(\hat X_t)-2\hat{\bfS}_{t}(x_0,\hat X_t) \right\rangle_{\mathbb{R}^k}\right]\dd t \\
    & = \int_0^T \E^{x_0} \left[ \left\| \bfS^\theta_t(\hat X_t) - \hat \bfS_t(x_0, \hat X_t) \right\|_{\mathbb{R}^k}^2 \right] \dd t +C.
\end{align*}

\section{Experiments}
\label{sec:experiments}
We now exemplify the bridge simulation code for the Heisenberg group whose sub-Riemannian structure is described in Section~\ref{sec:Heis}. We use an Euler--Heun to simulate the unconditioned Brownian motion, and we employ this to train score approximators using the denoising loss \eqref{eq:denoising_loss_Heisenberg} and the divergence loss \eqref{eq:loss_samples}. Code for representing the Heisenberg geometry, geometric computations, and training of the score approximations is available in the Jax Geometry library\footnote{\url{https://github.com/computationalevolutionarymorphometry/jaxgeometry/}}.

The neural net used is a 3-layer dense network with 15 nodes in each layer and exponential linear units (ELU) activation functions. For the loss functions depending on the divergence, it is important that the activation functions are differentiable. In contrast, with the denoising loss, the ELU can be replaced with e.g. rectified linear units (ReLU).
The nets are trained for 2500 epochs with batch size of 64 sample paths and 8 batches per epoch.

In Figure~\ref{fig:bridge_sample}, we sample a single sample path (blue) starting at $x_0=(0.5,0,0.8)$ and conditioned on hitting $x_T=(0,0,0)$ at $T=0.1$. For comparison, the minimising geodesic (red) between the two points $x_0$ and $x_T$ is included in the figure. We additionally plot the norm of the $(x,y)$-component and the $z$-component, respectively, as a function of $t\in[0,0.1]$, particularly to illustrate the progression of the $z$ coordinate from its starting value towards $0$.
\begin{figure}[ht]
\centering
\includegraphics[width=.48\linewidth,trim=100 25 100 100,clip]{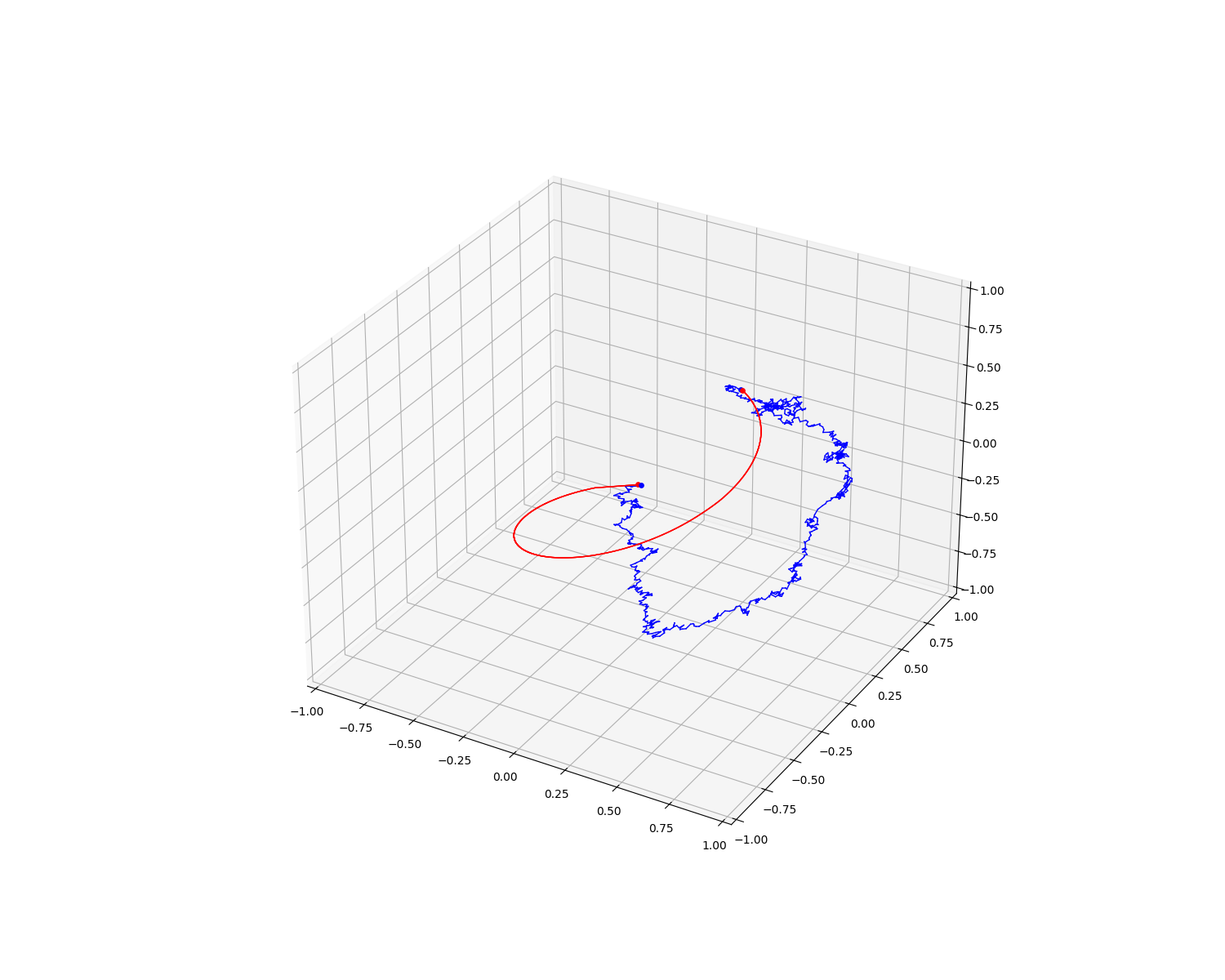}
\hfill 
\includegraphics[width=.48\linewidth,trim=100 25 100 100,clip]{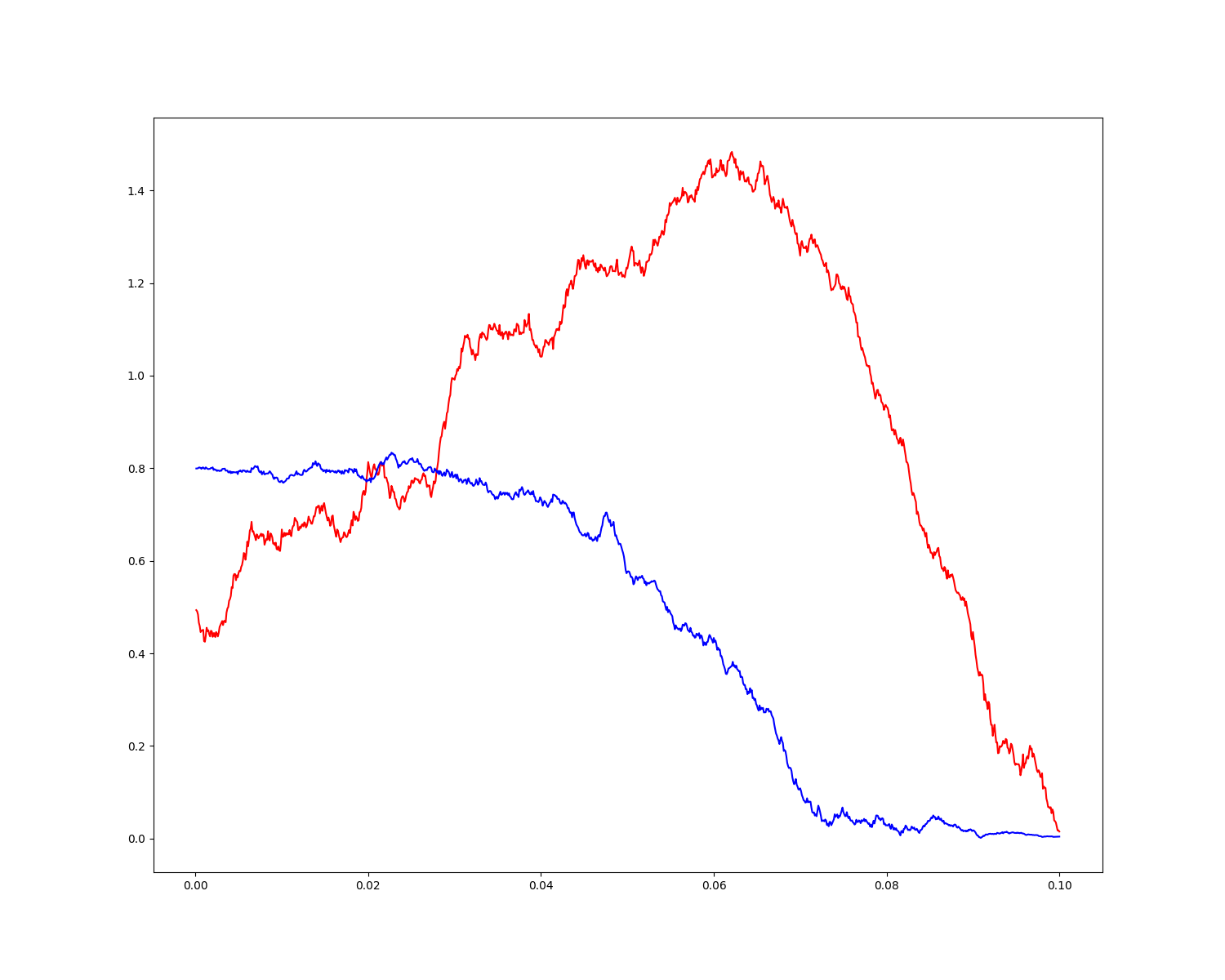}
\caption{Left: Sample path from bridge starting at $(0.5,0,0.8)$ conditioned on hitting $(0,0,0)$ at $T=0.1$ (blue curve). Geodesic between the same points corresponding to the limit $T\to 0$ (red curve). Right: Norm of $(x,y)$-component (red) and $z$-component (blue) for the sample path as a function of $t\in[0,0.1]$.}
\label{fig:bridge_sample}
\end{figure}

Since we know from short-time asymptotics established in~\cite{BN22} for diffusion processes on sub-Riemannian manifolds  that sample paths should concentrate around a geodesic if we reduce the conditioning time towards $0$, we plot in Figure~\ref{fig:bridge_stats} median curves and quartiles with conditioning for $T=0.1$, $T=0.2$, $T=0.5$ and $T=1$, respectively. The concentration effect is clearly visible.
\begin{figure}[ht]
\centering
\includegraphics[width=.45\linewidth,trim=100 25 100 100,clip]{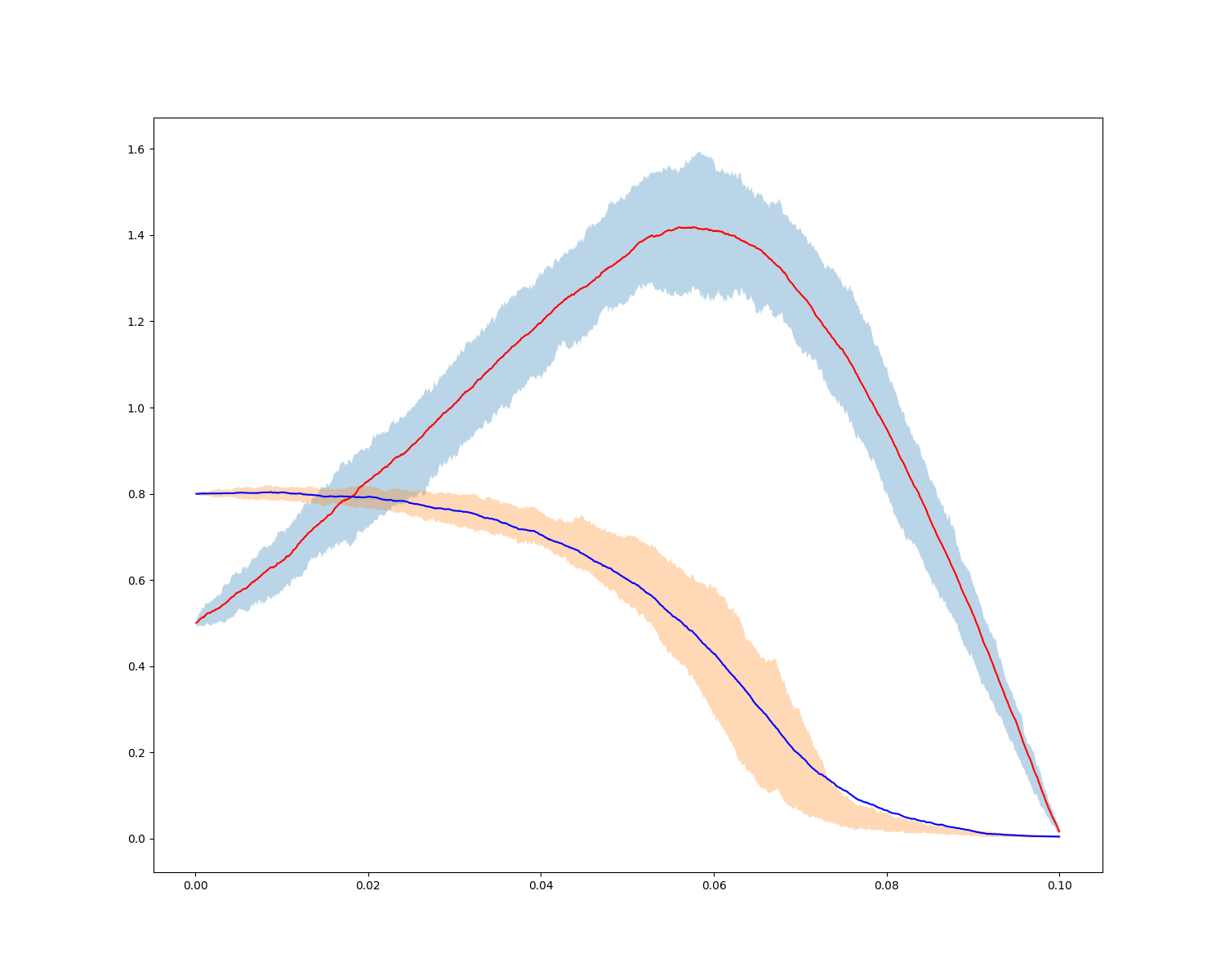}
\includegraphics[width=.45\linewidth,trim=100 25 100 100,clip]{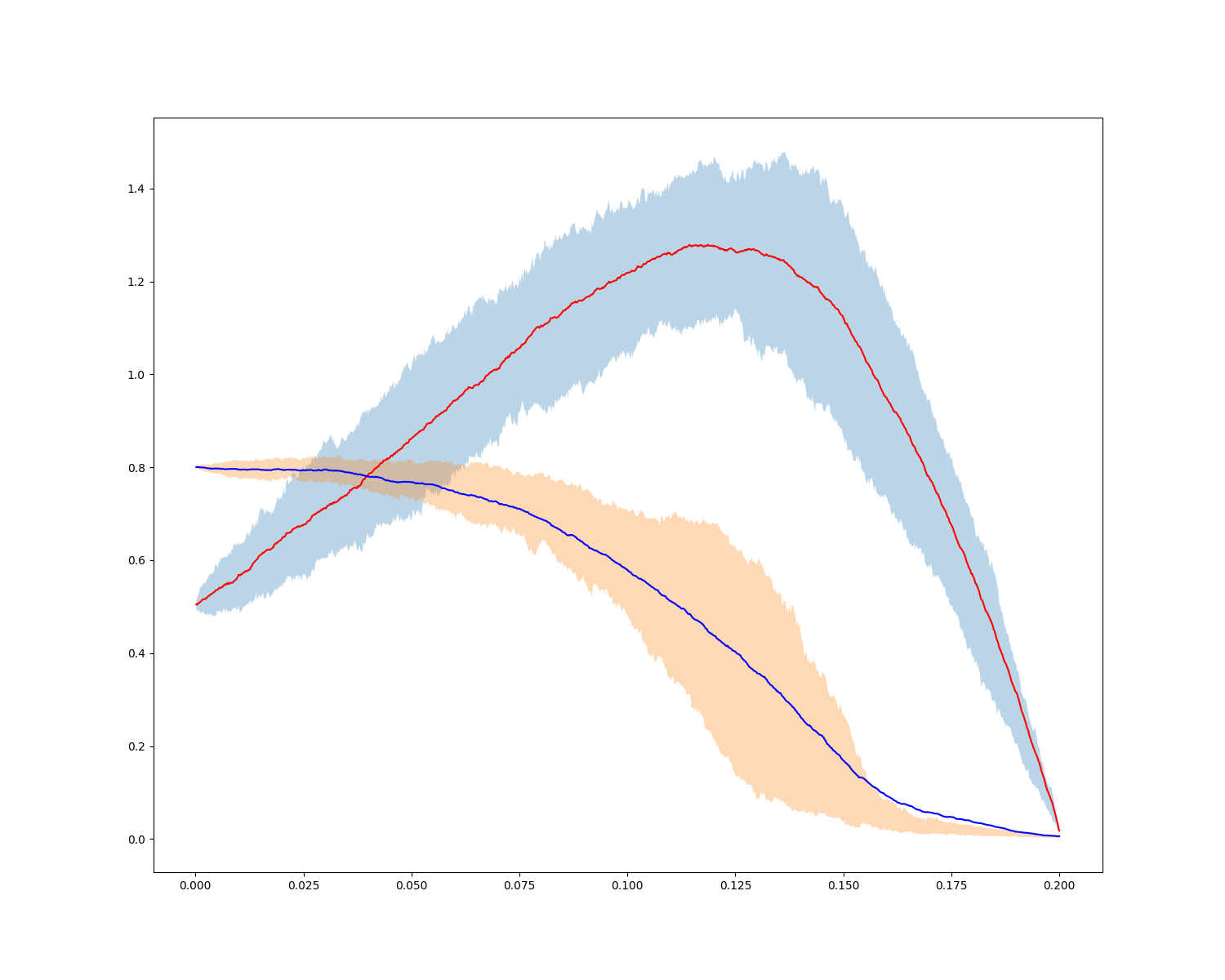}
\includegraphics[width=.45\linewidth,trim=100 25 100 100,clip]{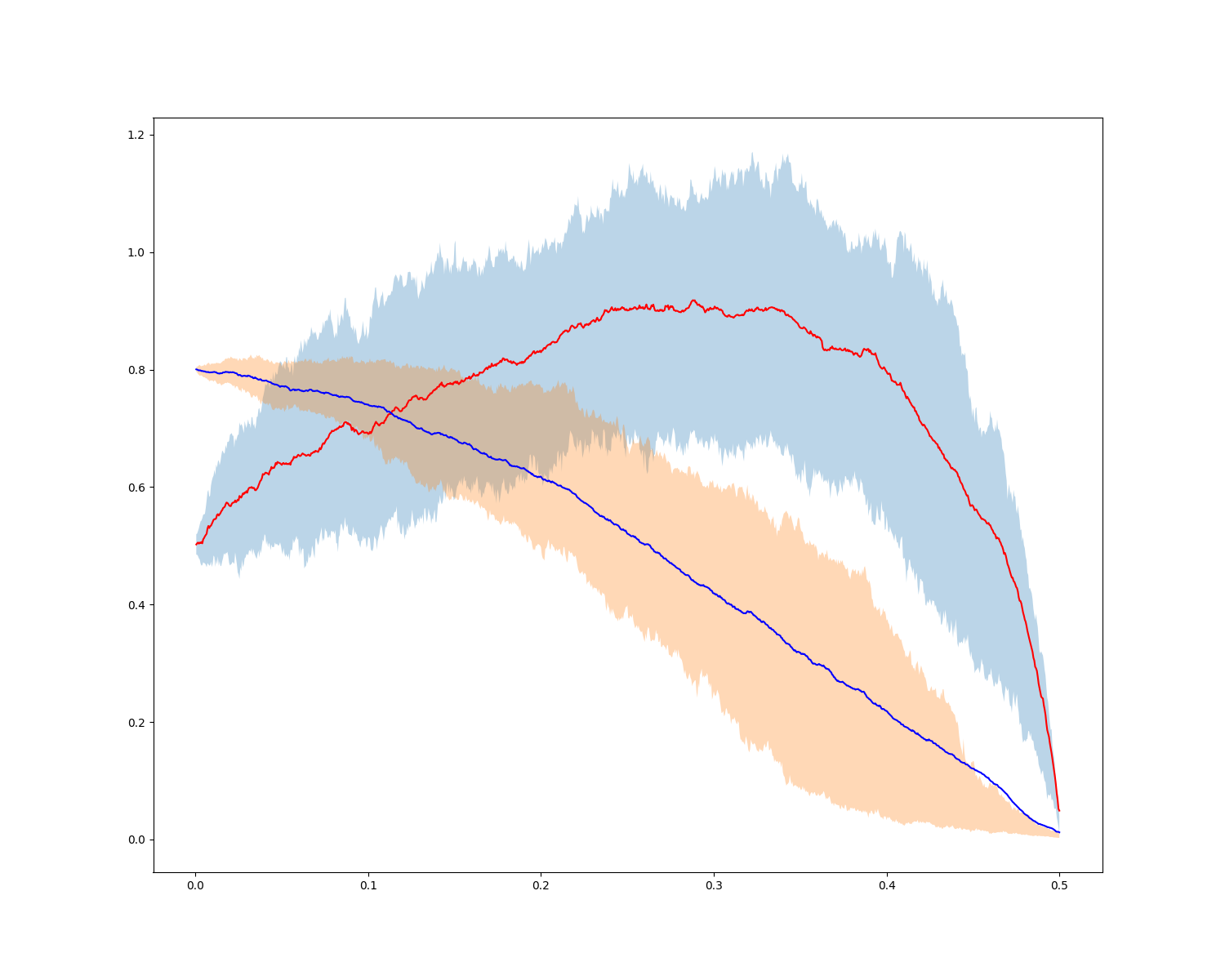}
\includegraphics[width=.45\linewidth,trim=100 25 100 100,clip]{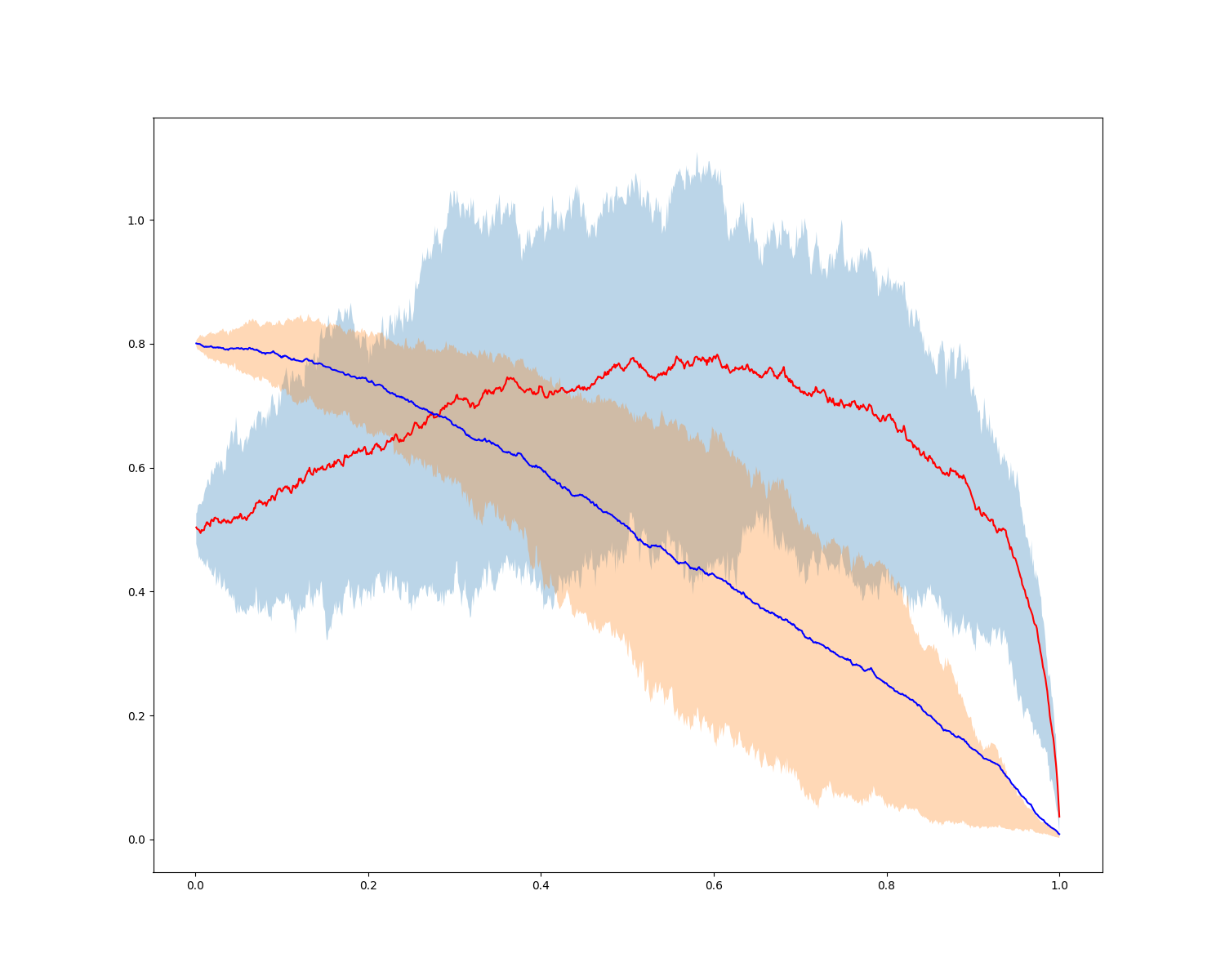}
\caption{Mean $(x,y)$-component and $z$-component norms over 100 samples with quartiles for $T=0.1$, $T=0.2$, $T=0.5$ and $T=1$, respectively.}
\label{fig:bridge_stats}
\end{figure}

Finally, Figure~\ref{fig:s1} shows plots of the estimated score fields in the unit cube in $\R^3$ as well as evaluated only on the $(x,y)$-slice of the unit cube satisfying $z=0.2$. The figure shows estimated score fields with the denoising loss \eqref{eq:denoising_loss_Heisenberg} (top row) and the divergence loss \eqref{eq:loss_samples} (bottow row). For values close to the origin, the estimated scores appear visually close. Our experience is that training with the divergence loss is in practice less stable than using the denoising loss which is exemplified in the figure when the score is evaluated for higher values of $z$ where the stochastic process has lower probability density and the training therefore is performed with fewer samples.
\begin{figure}[ht]
\centering
\includegraphics[width=.45\linewidth,trim=200 25 200 75,clip]{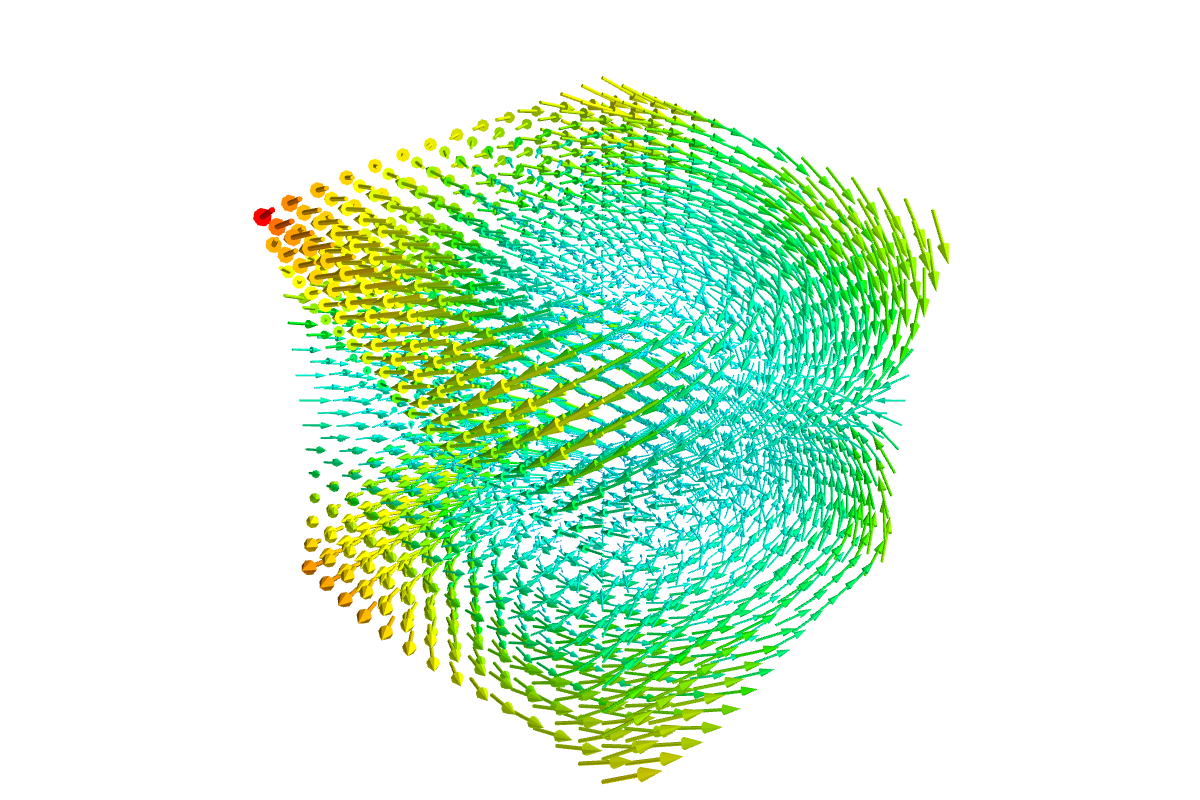}
\includegraphics[width=.45\linewidth,trim=0 0 0 0,clip]{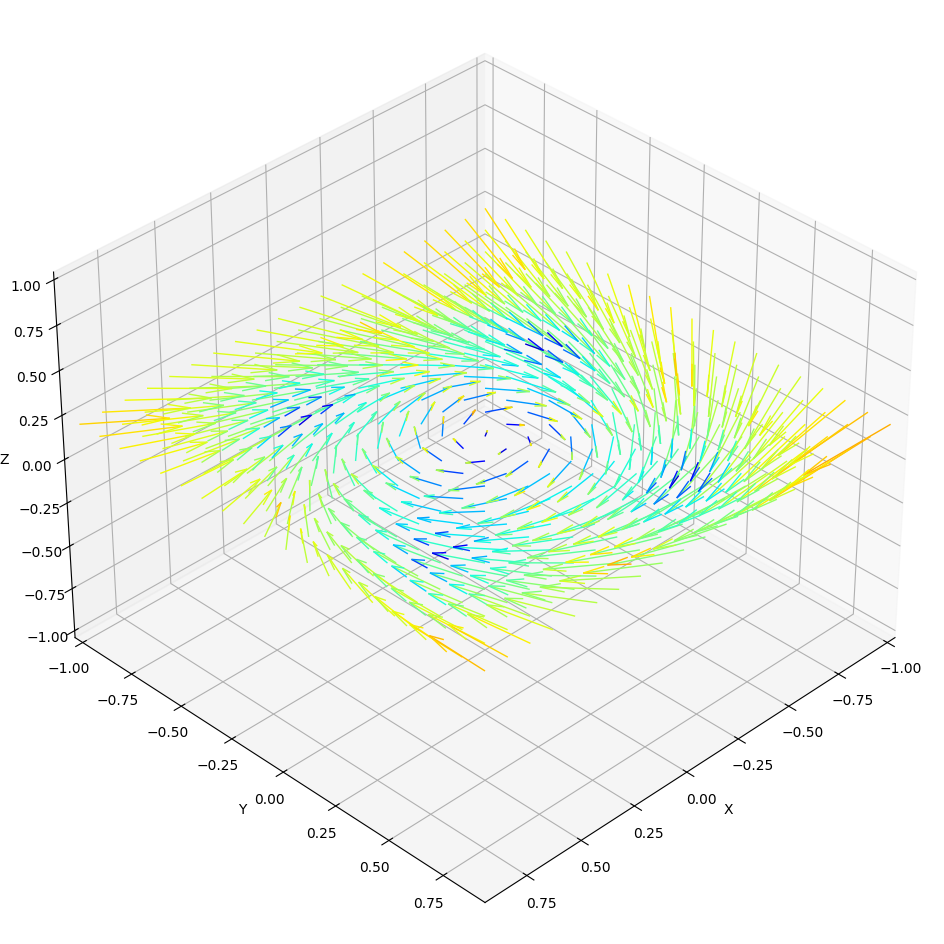}
\\
\includegraphics[width=.45\linewidth,trim=200 25 200 75,clip]{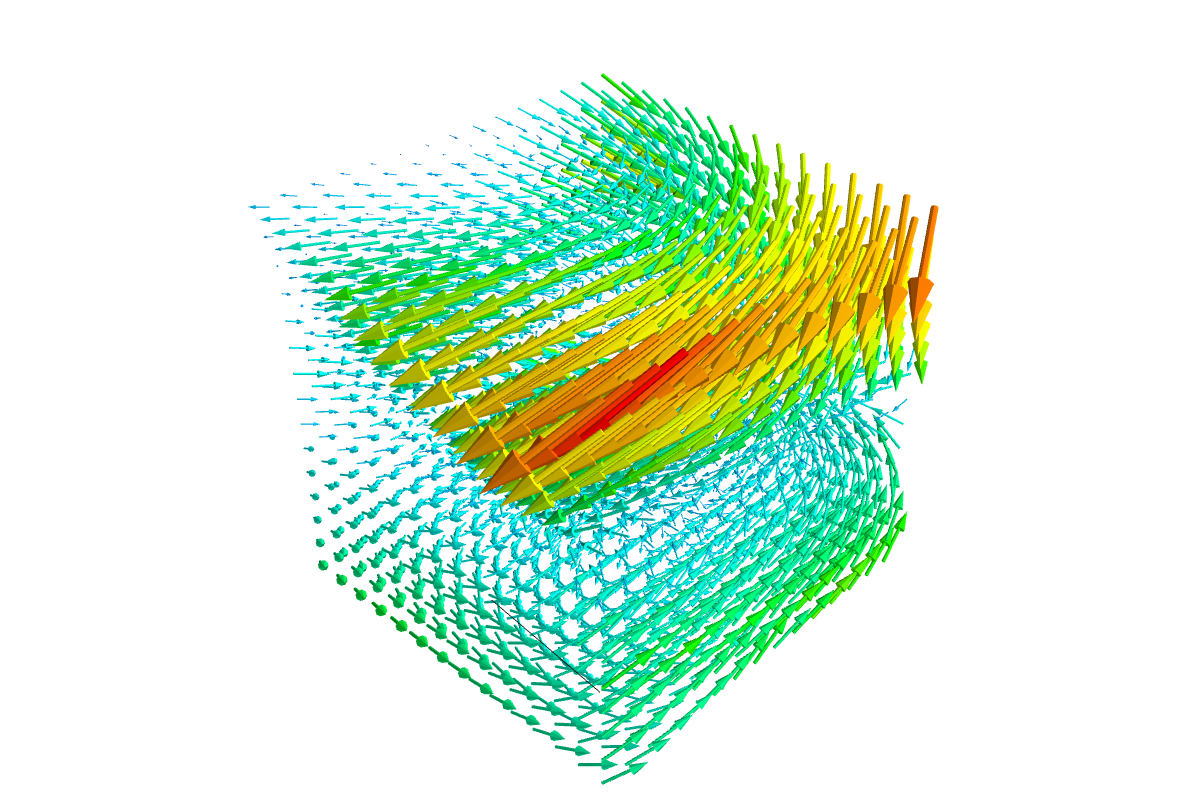}
\includegraphics[width=.45\linewidth,trim=0 0 0 0,clip]{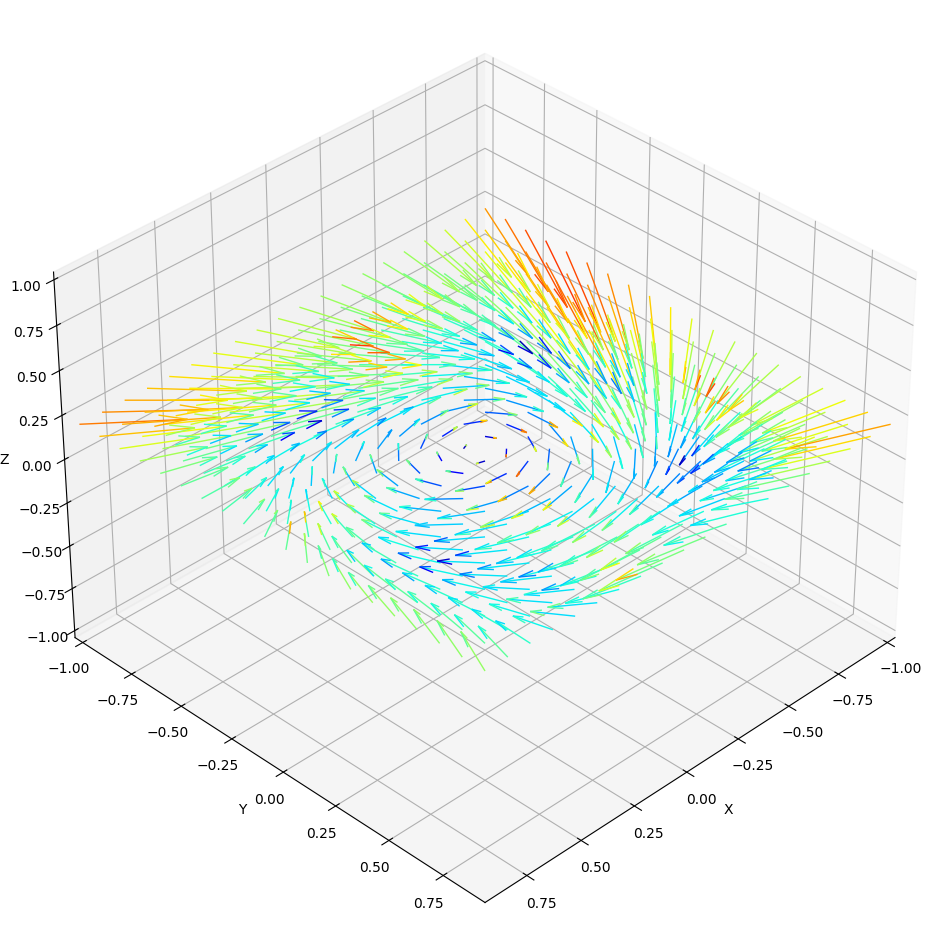}
\caption{Left: Estimated score field for $x,y,z\in[-1,1]$. Right: Estimated score for $x,y\in[-1,1]$ and $z=0.2$. Top row: Training with denoising loss \eqref{eq:denoising_loss_Heisenberg}. Bottom row: Training with divergence loss \eqref{eq:loss_samples}.
Arrow lengths scaled for visualisation, and arrows colored according to lengths.}
\label{fig:s1}
\end{figure}

\bibliographystyle{abbrv}
\bibliography{Bibliography}

\end{document}